\newtheorem{theorem}{Theorem}[section]
\newtheorem{corollary}[theorem]{Corollary}
\newtheorem{lemma}[theorem]{Lemma}
\newtheorem{proposition}[theorem]{Proposition}
\theoremstyle{definition}
\newtheorem{remark}[theorem]{Remark}
\newtheorem{definition}[theorem]{Definition}
\newtheorem{example}[theorem]{Example}
\newtheorem*{ack}{Acknowledgements}
\author[Brasco]{Lorenzo Brasco}
\author[Ruffini]{Berardo Ruffini}
\address[L. Brasco]{Aix-Marseille Universit\'e, CNRS
\newline\indent
Centrale Marseille, I2M, UMR 7373, 39 Rue Fr\'ed\'eric Joliot Curie
\newline\indent
13453 Marseille, France}
\email{lorenzo.brasco@univ-amu.fr}
\address[B. Ruffini]{Centre de Math\'ematiques Appliqu\'ees, \'Ecole Polytechnique
\newline\indent
91128 Palaiseau, France}
\email{berardo.ruffini@cmap.polytechnique.fr}
\date{\today}
\subjclass[2010]{46E35, 35P30, 39B72}
\keywords{Compact embedding, torsional rigidity, Hardy inequalities}
\numberwithin{equation}{section}
\title[Sobolev embeddings \& Torsion function]{Compact Sobolev embeddings and torsion functions}
\begin{document}

\begin{abstract}
For a general open set, we characterize the compactness of the embedding $W^{1,p}_0\hookrightarrow L^q$ in terms of the summability of its torsion function. In particular, for $1\le q<p$ we obtain that the embedding is continuous if and only if it is compact.
The proofs crucially exploit a {\it torsional Hardy inequality} that we investigate in details.
\end{abstract}

\maketitle

\begin{center}
\begin{minipage}{10cm}
\small
\tableofcontents
\end{minipage}
\end{center}

\section{Introduction}\label{section1}

\subsection{Foreword}

Let $1<p<+\infty$ and let $\Omega\subset\mathbb{R}^N$ be an open set.
We note by $W^{1,p}_0(\Omega)$ the completion of $C^\infty_0(\Omega)$ with respect to the norm
\[
u\mapsto \left(\int_\Omega |\nabla u|^p\,dx\right)^\frac{1}{p}.
\]
The aim of the present paper is to investigate the interplay between the continuity (and compactness) of the embedding $W^{1,p}_0(\Omega)\hookrightarrow L^q(\Omega)$ for $1\le q\le p$ and the integrability properties of the {\it $p-$torsion function of $\Omega$}, $w_\Omega$. The latter is the formal solution of
\begin{equation}
\label{eguazione}
-\Delta_p\, w_\Omega=1,\quad \mbox{ in }\Omega,\quad\qquad w_\Omega=0,\quad \mbox{ on }\partial\Omega,
\end{equation} 
where $\Delta_p\, u=\mathrm{div}(|\nabla u|^{p-2}\,\nabla u)$ is the $p-$Laplacian operator. The reader is referred to Section \ref{sec:2} for the precise definition of $p-$torsion function.
\vskip.2cm\noindent
An important contribution in this direction has been recently given by Bucur and Buttazzo, in a different setting. In particular, in \cite{bubu} the two authors considered the Sobolev space
\[
H^{1,p}_0(\Omega):=\left\{u\, :\, \int_{\Omega} |\nabla u|^p\,dx+\int_{\Omega} |u|^p\,dx<+\infty\ \mbox{ and }\ u\equiv 0 \mbox{ in }\mathbb{R}^N\setminus \Omega\right\},
\]
and they characterized the compactness of the embeddings 
\[
H^{1,p}_0(\Omega)\hookrightarrow L^1(\Omega)\qquad \mbox{ and }\qquad H^{1,p}_0(\Omega)\hookrightarrow L^p(\Omega),
\]
in terms of the summability of the formal solution $u_\Omega$ of 
\begin{equation}
\label{modified_torsion}
-\Delta_p\, u_\Omega+u_\Omega^{p-1}=1,\quad \mbox{ in }\Omega,\quad\qquad u_\Omega=0,\quad \mbox{ on }\partial\Omega.
\end{equation}
Namely, in \cite[Theorems 6.1 \& 6.2]{bubu} they proved that 
\[
H^{1,p}_0(\Omega)\hookrightarrow L^1(\Omega) \mbox{ is continuous }\ \Longleftrightarrow\ u_\Omega\in L^{1}(\Omega)\ \Longleftrightarrow\ H^{1,p}_0(\Omega)\hookrightarrow L^1(\Omega) \mbox{ is compact},
\]
and
\[
H^{1,p}_0(\Omega)\hookrightarrow L^p(\Omega) \mbox{ is compact }\quad \Longleftrightarrow\quad \begin{array}{c}
\mbox{for every $\varepsilon>0$, there exists $R>0$}\\
\mbox{ such that }\quad \|u_\Omega\|_{L^\infty(\Omega\setminus B_R)}<\varepsilon.
\end{array}
\]
We stress that in general $W^{1,p}_0(\Omega)\not=H^{1,p}_0(\Omega)$, unless $\Omega$ supports a Poincar\'e inequality of the type
\[
\int_\Omega |u|^p\,dx\le C\,\int_\Omega |\nabla u|^p,\qquad u\in C^\infty_0(\Omega),
\] 
and $\partial \Omega$ has suitable smoothness properties. 

\subsection{Main results}
In order to present our contribution, for every $1\le q\le p$ we introduce the Poincar\'e constant
\[
\lambda_{p,q}(\Omega):=\inf_{u\in C^\infty_0(\Omega)}\left\{\int_\Omega |\nabla u|^p\,dx\, :\, \int_\Omega|u|^q\,dx=1\right\}.
\]
We remark that the continuity of the embedding $W^{1,p}_0(\Omega)\hookrightarrow L^q(\Omega)$ is equivalent to the condition $\lambda_{p,q}(\Omega)>0$.
We then have the following results. For ease of presentation, we find it useful to distinguish between the case $1\le q<p$ and $q=p$.
\begin{theorem}
\label{thm:main}
Let $1<p<+\infty$ and $1\le q<p$. Let $\Omega\subset\mathbb{R}^N$ be an open set.
Then the following equivalences hold true 
\[
\lambda_{p,q}(\Omega)>0 \quad \Longleftrightarrow \quad w_\Omega\in L^{\frac{p-1}{p-q}\,q}(\Omega)\quad\ \Longleftrightarrow\quad W^{1,p}_0(\Omega)\hookrightarrow L^q(\Omega) \mbox{ is compact}.
\]
Moreover, we have
\begin{equation}
\label{lambda1}
1\le \lambda_{p,q}(\Omega)\,\left(\int_\Omega w_\Omega^{\frac{p-1}{p-q}\,q}\,dx\right)^\frac{p-q}{q}\le \frac{1}{q}\, \left(\frac{p-1}{p-q}\right)^{p-1}.
\end{equation}
\end{theorem}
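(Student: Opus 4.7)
The plan is to reduce everything to the double inequality \eqref{lambda1}: once it is established, the first claimed equivalence is immediate, and the passage from continuity to compactness follows from a tail estimate derived from the same ingredients. Throughout I will write $r:=q(p-1)/(p-q)$ and $\gamma:=(p-1)/(p-q)$, so that $\gamma q=r$ and $1+p(\gamma-1)=r$; these two identities are precisely what will make the bookkeeping close sharply.

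For the upper bound in \eqref{lambda1} I would use the competitor $\phi=w_\Omega^{\gamma}$ in the variational definition of $\lambda_{p,q}(\Omega)$. The chain rule gives $|\nabla\phi|^p=\gamma^p\,w_\Omega^{p(\gamma-1)}\,|\nabla w_\Omega|^p$, while testing $-\Delta_p w_\Omega=1$ against $w_\Omega^{r}$ (after truncation and exhaustion through bounded regular subdomains, replacing $w_\Omega$ by $w_\Omega\wedge k$) produces
\[
\int_\Omega |\nabla w_\Omega|^p\,w_\Omega^{p(\gamma-1)}\,dx=\frac{1}{r}\int_\Omega w_\Omega^{r}\,dx.
\]
Since $\int_\Omega \phi^q=\int_\Omega w_\Omega^{r}$, plugging into $\lambda_{p,q}(\Omega)\,(\int\phi^q)^{p/q}\le\int|\nabla\phi|^p$ rearranges to the upper bound with constant $\gamma^{p-1}/q=(p-1)^{p-1}/[q\,(p-q)^{p-1}]$.

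For the lower bound I would invoke the torsional Hardy inequality developed earlier in the paper, in the sharp form $\int_\Omega |u|^p\,w_\Omega^{-(p-1)}\,dx\le \int_\Omega |\nabla u|^p\,dx$ for $u\in W^{1,p}_0(\Omega)$, combined with H\"older's inequality of conjugate exponents $p/q$ and $p/(p-q)$ applied to the factorization $|u|^q=\bigl(|u|^p\,w_\Omega^{-(p-1)}\bigr)^{q/p}\cdot w_\Omega^{(p-1)q/p}$. The weight exponent produced is precisely $(p-1)q/(p-q)=r$, so
\[
\left(\int_\Omega |u|^q\,dx\right)^{p/q}\le \int_\Omega |\nabla u|^p\,dx\,\left(\int_\Omega w_\Omega^{r}\,dx\right)^{(p-q)/q},
\]
which is exactly the inequality $\lambda_{p,q}(\Omega)\,(\int w_\Omega^r)^{(p-q)/q}\ge 1$.

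Finally, compactness. The implication ``compact $\Rightarrow \lambda_{p,q}(\Omega)>0$'' is immediate, so it suffices to upgrade continuity (equivalently $w_\Omega\in L^r(\Omega)$) to compactness. Given a bounded sequence $u_n\rightharpoonup 0$ in $W^{1,p}_0(\Omega)$, on the bounded piece $\Omega\cap B_R$ the zero-extensions sit in a bounded subset of $W^{1,p}(\mathbb{R}^N)$ and Rellich--Kondrachov gives $u_n\to 0$ in $L^q(\Omega\cap B_R)$, while the same H\"older--Hardy argument run on $\Omega\setminus B_R$ yields the uniform tail bound
\[
\int_{\Omega\setminus B_R}|u_n|^q\,dx\le \|\nabla u_n\|_{L^p(\Omega)}^{q}\,\left(\int_{\Omega\setminus B_R} w_\Omega^{r}\,dx\right)^{(p-q)/p},
\]
which vanishes as $R\to\infty$ because $w_\Omega\in L^r(\Omega)$. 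The main obstacle I foresee is the rigorous use of $w_\Omega^{\gamma}$ as an admissible test function \emph{before} one knows $w_\Omega\in L^r(\Omega)$: this will require a careful exhaustion $\Omega_n\nearrow\Omega$ by bounded regular subdomains, exploiting the monotonicity $w_{\Omega_n}\nearrow w_\Omega$ to pass to the limit via Fatou's lemma and the sharp constant in the torsional Hardy inequality.
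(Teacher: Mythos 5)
Your proof follows essentially the same strategy as the paper: test the torsion equation with a power of $w_\Omega$ to obtain the upper bound in \eqref{lambda1}; combine the torsional Hardy inequality \eqref{HBgeneral} with H\"older for the lower bound; and upgrade continuity to compactness by controlling the $L^q$ mass on $\Omega\setminus B_R$ through $\int_{\Omega\setminus B_R} w_\Omega^{r}\,dx$. Your tail estimate is in fact a little cleaner than the paper's: by applying H\"older directly over $\Omega\setminus B_R$ and then majorizing $\int_{\Omega\setminus B_R}|u_n|^p\,w_\Omega^{1-p}\,dx$ by the global Hardy integral $\int_\Omega |u_n|^p\,w_\Omega^{1-p}\,dx\le\int_\Omega|\nabla u_n|^p\,dx$, you dispense with the cut-off $\eta_R$ that the paper introduces to localize the Hardy inequality, with no loss in the constant. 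The one step you pass over too lightly is the claim that the zero-extensions of $u_n$ ``sit in a bounded subset of $W^{1,p}(\mathbb{R}^N)$'': a bound on $\|\nabla u_n\|_{L^p}$ alone does not control $\|u_n\|_{L^p}$ on a general open set, so one must first observe that $\lambda_{p,q}(\Omega)>0$ bounds $\|u_n\|_{L^q(\Omega)}$ and then invoke the Gagliardo--Nirenberg inequalities of Proposition \ref{prop:GNS} (with $r=p$) to deduce the $L^p$ bound needed for Rellich--Kondrachov on the bounded piece; this is exactly how the paper closes that step. Finally, the admissibility issue you flag at the end --- using $w_\Omega^{\gamma}$ as a competitor before knowing $w_\Omega\in L^r(\Omega)$ --- is indeed resolved in the paper as you anticipate, by exhausting $\Omega$ with $\Omega\cap B_R$, running the argument on $w_{\Omega,R}$ (for which Lemma \ref{lm:VF} applies), and passing to the limit with Fatou's lemma.
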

\noindent
In the case $p=q$, the equivalence 
\[
\lambda_{p,q}(\Omega)>0 \qquad \Longleftrightarrow\qquad W^{1,p}_0(\Omega)\hookrightarrow L^q(\Omega) \mbox{ is compact},
\]
ceases to be true, as shown by simple examples. In this case, by relying on a result by van den Berg and Bucur \cite{vabu}, we obtain the following. 
\begin{theorem}
\label{thm:collection}
Let $1<p<+\infty$ and let $\Omega\subset\mathbb{R}^N$ be an open set. The following equivalence holds true 
\[
\lambda_{p,p}(\Omega)>0\quad \Longleftrightarrow\quad w_\Omega\in L^\infty(\Omega),
\]
and we have the double-sided estimate
\begin{equation}
\label{comprise}
1\le\lambda_{p,p}(\Omega)\,\|w_\Omega\|^{p-1}_{L^\infty(\Omega)}\le \mathcal{D}_{N,p},
\end{equation}
for some constant $\mathcal{D}_{N,p}>1$.
Moreover, we have
\begin{equation}
\label{compatto}
W^{1,p}_0(\Omega)\hookrightarrow L^p(\Omega) \mbox{ is compact }\quad \Longleftrightarrow\quad \begin{array}{c}
\mbox{for every $\varepsilon>0$, there exists $R>0$}\\
\mbox{ such that }\quad \|w_\Omega\|_{L^\infty(\Omega\setminus B_R)}<\varepsilon.
\end{array}
\end{equation}
\end{theorem}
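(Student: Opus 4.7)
The theorem splits into two independent assertions: the two-sided estimate \eqref{comprise}, from which the first equivalence $\lambda_{p,p}(\Omega)>0\Leftrightarrow w_\Omega\in L^\infty(\Omega)$ is immediate, and the compactness characterization \eqref{compatto}. I will treat them in turn.

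For \eqref{comprise}, the \emph{lower bound} would follow from the torsional Hardy inequality (with sharp constant $1$) developed earlier in the paper,
$$\int_\Omega \frac{|u|^p}{w_\Omega^{p-1}}\,dx\le \int_\Omega|\nabla u|^p\,dx,\qquad u\in C^\infty_0(\Omega),$$
itself a consequence of the Picone-type identity $|\nabla u|^p\ge |\nabla w_\Omega|^{p-2}\nabla w_\Omega\cdot\nabla(u^p/w_\Omega^{p-1})$ integrated against $-\Delta_p w_\Omega = 1$. Combining with the trivial pointwise bound $|u|^p\le\|w_\Omega\|_{L^\infty}^{p-1}\,|u|^p/w_\Omega^{p-1}$ and taking the infimum over $u$ yields $\lambda_{p,p}(\Omega)\,\|w_\Omega\|_\infty^{p-1}\ge 1$. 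The \emph{upper bound} is precisely the main inequality of van den Berg--Bucur in \cite{vabu}, which I would cite as a black box furnishing the constant $\mathcal{D}_{N,p}$.

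The ``$\Leftarrow$'' direction of \eqref{compatto} proceeds by a cutoff/tail split. For a bounded sequence $\{u_n\}\subset W^{1,p}_0(\Omega)$, a diagonal argument using the classical Rellich--Kondrachov theorem on $\Omega\cap B_R$ yields (up to a subsequence) strong $L^p$-convergence on every bounded ball. To control the tail, fix a cutoff $\eta_R$ vanishing on $B_R$ and equal to $1$ outside $B_{2R}$, with $|\nabla\eta_R|\le 2/R$; then $\eta_R u_n\in W^{1,p}_0(\Omega_R)$ with $\Omega_R:=\Omega\setminus\overline{B_R}$. By the comparison principle $w_{\Omega_R}\le w_\Omega$ on $\Omega_R$, so applying the lower bound in \eqref{comprise} to $\Omega_R$ gives
$$\int_{\Omega\setminus B_{2R}}|u_n|^p\,dx\le \int_{\Omega_R}|\eta_R u_n|^p\,dx\le \|w_\Omega\|_{L^\infty(\Omega_R)}^{p-1}\int_{\Omega_R}|\nabla(\eta_R u_n)|^p\,dx.$$
Expanding $|\nabla(\eta_R u_n)|^p$ and combining $\sup_n\|\nabla u_n\|_{L^p}<\infty$ with the (already established) boundedness of $u_n$ in $L^p(B_{2R})$ makes the last integral uniformly bounded in $n$; the assumption then forces the prefactor, and hence the tail, to vanish as $R\to\infty$, yielding strong $L^p$-convergence on all of $\Omega$.

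For ``$\Rightarrow$'', I argue by contradiction: if $\|w_\Omega\|_{L^\infty(\Omega\setminus B_R)}$ does not decay to $0$, there exist $\varepsilon_0>0$ and $x_n\in\Omega$ with $|x_n|\to\infty$ and $w_\Omega(x_n)\ge\varepsilon_0$. Since compactness of the embedding gives $\lambda_{p,p}(\Omega)>0$ and thus $w_\Omega\in L^\infty(\Omega)$, interior H\"older regularity for solutions of $-\Delta_p w=1$ provides a uniform local oscillation bound depending only on $\|w_\Omega\|_\infty$, $N$ and $p$; combined with the zero boundary values, this forces $\mathrm{dist}(x_n,\partial\Omega)\ge\rho_0$ for some $\rho_0>0$ independent of $n$. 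Choosing $\varphi\in C^\infty_0(B_{\rho_0})\setminus\{0\}$, the translates $u_n(\cdot):=\varphi(\cdot-x_n)$ form a bounded sequence in $W^{1,p}_0(\Omega)$ with $\|u_n\|_{L^p}=\|\varphi\|_{L^p}>0$ and $u_n\rightharpoonup 0$, contradicting compactness. The main technical point is the tail estimate above, where the commutator $|u_n|^p|\nabla\eta_R|^p$ must be absorbed uniformly in $n$, forcing the order of operations: fix $R$, exploit strong convergence on $B_{2R}$, only then send $R\to\infty$.
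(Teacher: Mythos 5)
Your treatment of the first equivalence and of \eqref{comprise} matches the paper's: the lower bound comes from the torsional Hardy inequality (Proposition~\ref{prop:HBgeneral}) combined with the pointwise bound $w_\Omega \le \|w_\Omega\|_\infty$, and the upper bound is deferred to van den Berg--Bucur. Your ``$\Longleftarrow$'' direction of \eqref{compatto} is also in the same spirit as the paper's: both use a cutoff $\eta_R$ together with \eqref{HBgeneral} to kill the tail uniformly in $n$, the only cosmetic difference being that you finish with Rellich--Kondrachov plus a diagonal argument where the paper invokes Riesz--Fr\'echet--Kolmogorov.

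The gap is in your ``$\Longrightarrow$'' direction. The paper does not prove it; it simply refers to Bucur--Buttazzo. Your direct argument hinges on the claim that, since $w_\Omega\in L^\infty(\Omega)$, interior H\"older regularity together with ``zero boundary values'' forces $w_\Omega(x_n)\ge\varepsilon_0\ \Rightarrow\ \mathrm{dist}(x_n,\partial\Omega)\ge\rho_0$, whence a fixed test bump can be translated to $x_n$. This is not correct for general open sets. Interior regularity controls the oscillation of $w_\Omega$ only on balls $B_\rho(x)\Subset\Omega$; it gives no modulus of continuity up to $\partial\Omega$, and the boundary condition $w_\Omega\in W^{1,p}_0(\Omega)$ is a statement only up to $p$-polar sets, not a pointwise vanishing. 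Concretely, for $1<p<N$ take $\Omega'=B_1(y)\setminus\{y\}$ as a component of $\Omega$ placed far from the origin: since $\{y\}$ has zero $p$-capacity, $w_{\Omega'}=w_{B_1(y)}$, so $w_\Omega$ stays bounded away from $0$ at points arbitrarily close to $y\in\partial\Omega$. Thus $\mathrm{dist}(x_n,\partial\Omega)$ can go to $0$ while $w_\Omega(x_n)\ge\varepsilon_0$, and then no ball $B_{\rho_0}(x_n)$ is contained in $\Omega$, so $\varphi(\cdot-x_n)\notin C^\infty_0(\Omega)$. The correct route must work with $p$-capacity rather than Euclidean distance to $\partial\Omega$ (e.g.\ use a local $L^\infty$--$L^p$ estimate for the extension of $w_\Omega$ as a subsolution of $-\Delta_p w\le 1$ on $\mathbb{R}^N$ to show the superlevel sets $\{w_\Omega>\varepsilon_0/2\}\cap B_\rho(x_n)$ have uniformly positive measure, and then build a non-compact sequence out of truncations of $w_\Omega$ itself), which is closer in spirit to the Bucur--Buttazzo argument the paper cites. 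As written, your proof of ``$\Longrightarrow$'' does not go through.
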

\begin{remark}
For $p=2$ and $q=1$, the result of Theorem \ref{thm:main} is essentially contained in \cite[Theorem 2.2]{bubuve}.
The lower bound in \eqref{comprise} generalizes to $p\not=2$ the estimate of van den Berg in \cite[Theorem 5]{va}. For $\Omega$ smooth and bounded, this was proved in \cite[Proposition 6]{buer} with a different argument.
As for the upper bound, an explicit expression for the costant $\mathcal{D}_{N,p}$ is not given in \cite{vabu}. However, a closer inspection of their proof reveals that it could be possible to produce an explicit value for $\mathcal{D}_{N,p}$ (which is very likely not optimal). In the particular case $p=2$, van den Berg and Carroll in \cite[Theorem 1]{vaca} produced the value
\[
\mathcal{D}_{N,2}=4+3\,\,N\,\log 2.
\]
\end{remark}

\subsection{A comment on the proofs}
Before entering into the mathematical details of the paper, the reader may find it useful to have an idea on some key ingredients of the proofs. In this respect, we wish to mention that a prominent role is played by the {\it torsional Hardy inequality} for general open sets, which is introduced and proved in this paper. The latter is an Hardy-type inequality where the distance function is replaced by the $p-$torsion function. A particular instance is given by
\begin{equation}
\label{THI}
\int_\Omega \frac{|u|^p}{w_\Omega^{p-1}}\,dx\le \int_\Omega |\nabla u|^p\,dx,\qquad \mbox{ for every }u\in W^{1,p}_0(\Omega),
\end{equation}
though we refer the reader to Theorem \ref{thm:delta} and Proposition \ref{prop:HBgeneral} below for a more precise statement and some generalizations. From \eqref{THI} it is then easy to infer for example the lower bounds in \eqref{lambda1} and \eqref{comprise}, when $w_\Omega$ has the required integrability. We also point out that \eqref{THI} holds with constant $1$ and this happens to be optimal.
Observe that inequality \eqref{THI} is dimensionally correct, since equation \eqref{eguazione} entails that $w_\Omega$ scales like a length to the power $p/(p-1)$. 

\subsection{Plan of the paper}
In Section \ref{sec:2} we define the $p-$torsion function of a set and state some general results needed in the sequel. The subsequent Section \ref{sec:3} proves some properties of the $p-$torsion function that will be used throughout the whole paper. The above mentioned torsional Hardy inequality is stated and proved in Section \ref{sec:4}. Then the proofs of Theorems \ref{thm:main} and \ref{thm:collection} are contained in Section \ref{sec:5}. Finally, we conclude the paper by addressing the sharpness issue for the torsional Hardy inequality, which is indeed the content of Section \ref{sec:6}.
For completeness, some known convexity inequalities used in Section \ref{sec:6} are stated in Appendix \ref{sec:A}, mainly without proofs.

\begin{ack}
This work started with a question raised by our {\it Maestro} Giuseppe Buttazzo, about the possibility of having a torsional Hardy inequality. We wish to thank him for this initial input. Dorin Bucur kindly pointed out his paper \cite{vabu}, as well as the paper \cite{buer}, we are grateful to him. We also acknowledge some lively discussions with our friend and colleague Guido De Philippis. 
Finally, both authors have been supported by the {\it Agence Nationale de la Recherche}, through the project ANR-12-BS01-0014-01 {\sc Geometrya}.
\end{ack}

\section{Preliminaries}
\label{sec:2}

\begin{definition}[Torsion function: variational construction]
\label{defi:compact}
Let $1<p<+\infty$ and assume that the embedding $W^{1,p}_0(\Omega)\hookrightarrow L^1(\Omega)$ is compact. Then the following variational problem admits a unique solution 
\begin{equation}
\label{energia}
\min\left\{\frac{1}{p}\,\int_\Omega |\nabla u|^p\,dx-\int_\Omega u\,dx \,:\,u\in W^{1,p}_0(\Omega) \right\}.
\end{equation}
We denote by $w_\Omega$ such a solution.
The function $w_\Omega$ is called {\it $p-$torsion function of $\Omega$}. By optimality, it solves
\[
\left\{
\begin{array}{rcll}
-\Delta_p w_\Omega&=&1,\quad &\text{ in } \Omega.\\
w_\Omega&=&0, &\text{ in } \partial\Omega,
\end{array} 
\right.
\]
where $\Delta_p$ is the $p-$Laplacian operator, i.e. $\Delta_p u:=\mathrm{div}(|\nabla u|^{p-2}\,\nabla u)$.
\end{definition}
The previous boundary value problem is intended in the usual weak sense that is
\begin{equation}
\label{weakeqw}
\int_{\Omega} \left\langle|\nabla w_\Omega|^{p-2}\,\nabla w_\Omega,\nabla\phi\right\rangle\,dx=\int_{\Omega}\phi\,dx,\qquad \mbox{ for any }\phi\in W^{1,p}_0(\Omega).
\end{equation}
The definition of  $w_\Omega$ is linked to an optimal Poincar\'e constant, through the relation
\[
\begin{split}
\left(\frac{p}{p-1}\,\max_{u\in W^{1,p}_0(\Omega)}\right.&\left.\left\{\int_\Omega u\,dx-\frac{1}{p}\,\int_{\Omega}|\nabla u|^p\,dx \right\}\right)^{p-1}\\
&=\max_{u\in W^{1,p}_0(\Omega)\setminus\{0\}} \frac{\left(\displaystyle\int_\Omega |u|\,dx\right)^p}{\displaystyle\int_\Omega |\nabla u|^p\,dx}=:T_p(\Omega).
\end{split}
\]
In analogy with the quadratic case, the quantity $T_p(\Omega)$ is called {\it $p-$torsional rigidity of $\Omega$}. By using the equation \eqref{weakeqw}, one can see that the following relation holds
\[
T_p(\Omega)=\left(\int_\Omega w_\Omega\,dx\right)^{p-1}.
\]
 When the embedding $W^{1,p}_0(\Omega)\hookrightarrow L^1(\Omega)$ fails to be compact, the $p-$torsion function of $\Omega$ is defined as follows (see also \cite{vabu,bubu}).  By $B_R$ we note the open ball centered at the origin and with radius $R>0$.
\begin{definition}[Torsion function: general construction]
\label{defi:general}
Let us define 
\[
R_\Omega:=\inf\{R>0\, :\, |\Omega\cap B_R|>0\}.
\]
Then for every $R> R_\Omega$, we take $w_{\Omega,R}\in W^{1,p}_0(\Omega\cap B_R)$ to be the $p-$torsion function of the bounded open set\footnote{This is well-defined, since in this case $W^{1,p}_0(\Omega\cap B_R)\hookrightarrow L^1(\Omega\cap B_R)$ is compact.} $\Omega\cap B_R$, extended by $0$ outside. By the comparison principle, we get that $w_{\Omega,R}\ge w_{\Omega, R'}$ whenever $R\ge R'$, thus is well posed the definition
\begin{equation}
\label{torsioneee}
w_\Omega(x):=\lim_{R\to\infty} w_{\Omega,R}(x),
\end{equation} 
The limit is intended in the pointwise sense.
\end{definition}
\begin{remark}
Of course, in many situations we could have $|\{x\, :\, w_\Omega(x)=+\infty\}|>0$. This is the case for example of $\Omega=\mathbb{R}^N$, since 
\[
w_{\mathbb{R}^N,R}=w_{B_R}=\frac{\left(R^\frac{p}{p-1}-|x|^\frac{p}{p-1}\right)_+}{A_{N,p}},\qquad \mbox{ where }\ A_{N,p}=\frac{p}{p-1}\, N^\frac{1}{p-1},
\]
and thus in this case $w_{\mathbb{R}^N}$ is the trivial function which is everywhere $+\infty$.
\end{remark}
The first simple result shows that Definition \ref{defi:general} is coherent with the compact case. Indeed in this case \eqref{torsioneee} boils down to the usual torsion function given by Definition \ref{defi:compact}
\begin{lemma}
Let $1<p<+\infty$ and assume that the embedding $W^{1,p}_0(\Omega)\hookrightarrow L^1(\Omega)$ is compact. Then the function defined by \eqref{torsioneee} is the unique solution of \eqref{energia}.
\end{lemma}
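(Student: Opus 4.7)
My plan is to show that the pointwise limit $w_\Omega$ defined in \eqref{torsioneee} is nothing but the unique minimizer $u^\ast\in W^{1,p}_0(\Omega)$ of the strictly convex, coercive functional
$$
\mathcal{F}(u) := \frac{1}{p}\int_\Omega |\nabla u|^p\,dx - \int_\Omega u\,dx.
$$
Coercivity is a consequence of the compactness hypothesis, which implies the Poincar\'e-type inequality $\int_\Omega |u|\,dx \le C\,\|\nabla u\|_{L^p(\Omega)}$ for every $u\in W^{1,p}_0(\Omega)$. Recall from Definition \ref{defi:general} that, by the comparison principle, the family $\{w_{\Omega,R}\}_{R>R_\Omega}$ is monotone nondecreasing in $R$.

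The first step is a uniform bound. Since $w_{\Omega,R}$ minimizes $\mathcal{F}$ on the subspace $W^{1,p}_0(\Omega\cap B_R) \subset W^{1,p}_0(\Omega)$, we have $\mathcal{F}(w_{\Omega,R})\le \mathcal{F}(0)=0$; combining this with the Poincar\'e inequality yields
$$
\tfrac{1}{p}\|\nabla w_{\Omega,R}\|_{L^p(\Omega)}^{p} \le \int_\Omega w_{\Omega,R}\,dx \le C\,\|\nabla w_{\Omega,R}\|_{L^p(\Omega)},
$$
so $\|\nabla w_{\Omega,R}\|_{L^p(\Omega)}$ is bounded independently of $R$. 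By reflexivity together with the compactness assumption, some subsequence $w_{\Omega,R_k}$ converges weakly in $W^{1,p}_0(\Omega)$, strongly in $L^1(\Omega)$, and a.e.\ to some $\tilde w\in W^{1,p}_0(\Omega)$. Since the monotone pointwise limit of the whole family is $w_\Omega$, we must have $\tilde w=w_\Omega$ a.e., and in particular $w_\Omega\in W^{1,p}_0(\Omega)$.

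Next I would show that $w_\Omega$ minimizes $\mathcal{F}$. Pick any $\phi\in C^\infty_0(\Omega)$; then $\mathrm{supp}\,\phi\subset B_{R_k}$ for $k$ large, so $\phi\in W^{1,p}_0(\Omega\cap B_{R_k})$ and the minimality of $w_{\Omega,R_k}$ gives $\mathcal{F}(w_{\Omega,R_k})\le \mathcal{F}(\phi)$. Passing to the liminf and using the weak lower semicontinuity of $u\mapsto \int |\nabla u|^p\,dx$ together with the strong $L^1$-convergence of the linear term, we obtain $\mathcal{F}(w_\Omega)\le \mathcal{F}(\phi)$. Density of $C^\infty_0(\Omega)$ in $W^{1,p}_0(\Omega)$ and continuity of $\mathcal{F}$ upgrade this to $\mathcal{F}(w_\Omega)\le \mathcal{F}(v)$ for every $v\in W^{1,p}_0(\Omega)$, so by uniqueness $w_\Omega=u^\ast$.

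The only delicate point is the identification of the \emph{pointwise} limit appearing in Definition \ref{defi:general} with the \emph{weak} $W^{1,p}_0$-limit of a subsequence. This is precisely where the compactness of the embedding $W^{1,p}_0(\Omega)\hookrightarrow L^1(\Omega)$ enters: it ensures the a.e.\ convergence along the subsequence needed to match the two limits, something monotonicity alone would not deliver.
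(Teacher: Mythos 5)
Your proof is correct and takes essentially the same route as the paper: extract a uniformly bounded subsequence (you use the Poincaré inequality where the paper uses the finiteness of the $p$-torsional rigidity $T_p(\Omega)$, but these are the same estimate), pass to a weak $W^{1,p}_0$-limit which agrees with the pointwise limit thanks to compactness, and then transfer minimality via weak lower semicontinuity of the Dirichlet term together with density of $C^\infty_0(\Omega)$. The only cosmetic difference is that the paper obtains the a priori bound by testing the Euler--Lagrange equation with $w_{\Omega,R}$ itself (getting the identity $\int|\nabla w_{\Omega,R}|^p=\int w_{\Omega,R}$), while you use $\mathcal{F}(w_{\Omega,R})\le\mathcal{F}(0)=0$, which yields the same conclusion.
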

\begin{proof}
The first observation is that compactness of the embedding $W^{1,p}_0(\Omega)\hookrightarrow L^1(\Omega)$ entails
\[
T_p(\Omega)<+\infty.
\]
Then we extend each $w_{\Omega,R}$ to $0$ in $\Omega\setminus B_R$, so that $w_{\Omega,R}\in W^{1,p}_0(\Omega)$. By using the equation we obtain
\begin{equation}
\label{uff!}
\int_{\Omega} |\nabla w_{\Omega,R}|^p\,dx=\int_\Omega w_{\Omega,R}\,dx.
\end{equation}
On the other hand, the definition of $p-$torsion implies that
\[
\int_\Omega w_{\Omega,R}\,dx\le \left(\int_\Omega |\nabla w_{\Omega,R}|^p\,dx\right)^\frac{1}{p}\,T_p(\Omega)^\frac{1}{p},
\]
since $w_{\Omega,R}$ is admissible for the variational problem defining $T_p(\Omega)$.
By keeping the two informations together, we get
\[
\int_\Omega |\nabla w_{\Omega,R}|^p\,dx\le T_p(\Omega)^\frac{1}{p-1},\qquad \mbox{ for every }R> R_\Omega.
\] 
This implies that (up to a subsequence) $w_{\Omega,R}$ converges weakly in $W^{1,p}_0(\Omega)$. Since we have also $L^1(\Omega)$ strong convergence (by compactness of the embedding), the limit function has to be the function $w_\Omega$ defined by \eqref{torsioneee}. This shows in particular that $w_\Omega\in W^{1,p}_0(\Omega)\cap L^1(\Omega)$. 
\par
In order to show that $w_\Omega$ coincides with the torsion function, we take $\phi\in C^\infty_0(\Omega)$ and $R_1>R_\Omega$ large enough so that $\mathrm{spt}(\phi)\Subset \Omega\cap B_R$ for every $R\ge R_1$. By minimality of $w_{\Omega,R}$ we get
\[
\frac{1}{p}\,\int_{\Omega} |\nabla w_{\Omega,R}|^p\,dx-\int_\Omega w_{\Omega,R}\,dx\le \frac{1}{p}\,\int_{\Omega} |\nabla \phi|^p\,dx-\int_\Omega \phi\,dx,\qquad \mbox{ for every }R\ge R_1.
\]
By passing to the limit as $R$ goes to $+\infty$ in the left-hand side, we get
\[
\frac{1}{p}\,\int_{\Omega} |\nabla w_{\Omega}|^p\,dx-\int_\Omega w_{\Omega}\,dx\le \frac{1}{p}\,\int_{\Omega} |\nabla \phi|^p\,dx-\int_\Omega \phi\,dx.
\]
For the gradient term, we used the weak lower semicontinuity of the norm in $W^{1,p}_0(\Omega)$.
Finally, by arbitrariness of $\phi\in C^\infty_0(\Omega)$ the previous inequality shows that $w_\Omega$ is the (unique) solution of \eqref{energia}.
\end{proof}
\begin{remark}[Heat-based definition]
In the case $p=2$, the torsion function of an open set $\Omega\subset\mathbb{R}^N$ can also be defined in terms of the heat equation. We briefly recall the construction, by referring for example to \cite{va,vaca} for more details. One considers the initial boundary value problem
\[
\left\{\begin{array}{rcll}
\partial_t u & =& \Delta u,& \mbox{ in } \Omega\times\mathbb R, \\
u&=&0, & \mbox{ on } \partial \Omega\times\mathbb R,\\
u&=&1, & \mbox{ for } t=0.
\end{array}
\right.
\]
If $U_\Omega$ denotes the solution of this problem, we set
\[
W_\Omega(x)=\int_0^\infty U_\Omega(t,x)\,dt,\qquad x\in\Omega.
\]
It is not difficult to see that $W_\Omega$ solves \eqref{weakeqw}. 
For $p\not=2$ such a definition is not available.
\end{remark}
In what follows, for $p\not= N$ we set
\[
p^*=\left\{\begin{array}{cl}
\dfrac{N\,p}{N-p},& \mbox{ if } 1<p<N,\\
+\infty, & \mbox{ if } p>N.
\end{array}
\right.
\]
We will need the following particular family of Gagliardo-Nirenberg inequalities.
\begin{proposition}[Gagliardo-Nirenberg inequalities]
\label{prop:GNS}
Let $1<p<+\infty$ and $1\le q\le p$, then for every $u\in C^\infty_0(\mathbb{R}^N)$ we have:
\vskip.2cm
\begin{itemize}
\item for $p\not= N$ and $q<r\le p^*$
\begin{equation}
\label{sub}
\left(\int_{\mathbb{R}^N} |u|^{r}\,dx \right)^\frac{1}{r}\le C_1\,\left(\int_{\mathbb{R}^N} |u|^q\,dx\right)^\frac{1-\vartheta}{q}\,\left(\int_{\mathbb{R}^N} |\nabla u|^p\,dx\right)^\frac{\vartheta}{p},
\end{equation}
for some $C_1=C_1(N,p,q,r)>0$ and 
\[
\vartheta=\left(1-\frac{q}{r}\right)\,\frac{N\,p}{N\,p+p\,q-N\,q};
\]
\item for $p=N$ and $q<r<\infty$
\begin{equation}
\label{conforme}
\left(\int_{\mathbb{R}^N} |u|^r\,dx\right)^\frac{1}{r}\le C_2\,\left(\int_{\mathbb{R}^N} |\nabla u|^N\,dx\right)^{\frac{r-q}{N\,r}}\, \left(\int_{\mathbb{R}^N} |u|^q\,dx\right)^\frac{1}{r},
\end{equation}
for some $C_2=C_2(N,q,r)>0$.
\end{itemize}
\end{proposition}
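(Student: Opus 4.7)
My overall plan is to deduce both inequalities from the classical Sobolev embedding together with H\"older's interpolation, splitting into the three regimes $1<p<N$, $p>N$ and $p=N$; the conformal case will require the most care.

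\textbf{Subcritical range $1<p<N$.} I start from the classical Sobolev inequality $\|u\|_{L^{p^{*}}}\le S_{N,p}\,\|\nabla u\|_{L^{p}}$ on $\mathbb{R}^N$. For $q<r\le p^{*}$, pick the unique $\vartheta\in(0,1]$ solving
\[
\frac{1}{r}=\frac{1-\vartheta}{q}+\frac{\vartheta}{p^{*}}.
\]
A routine algebraic rearrangement identifies this $\vartheta$ with the formula in the statement. H\"older's inequality then yields $\|u\|_{L^{r}}\le \|u\|_{L^{q}}^{1-\vartheta}\,\|u\|_{L^{p^{*}}}^{\vartheta}$, and inserting the Sobolev bound produces \eqref{sub}.

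\textbf{Supercritical range $p>N$.} Now $p^{*}=+\infty$, and I would replace the Sobolev inequality by a scale-invariant Morrey-type bound
\[
\|u\|_{L^{\infty}}\le C\,\|\nabla u\|_{L^{p}}^{\alpha}\,\|u\|_{L^{q}}^{1-\alpha},\qquad \alpha=\frac{Np}{Np+pq-Nq},
\]
obtained by applying Morrey's estimate to the rescaled function $u_{\lambda}(x)=u(\lambda x)$ and optimising $\lambda>0$ so as to balance the gradient and the $L^{q}$ terms. The elementary pointwise bound $|u|^{r}\le \|u\|_{L^{\infty}}^{r-q}\,|u|^{q}$ then gives $\|u\|_{L^{r}}\le \|u\|_{L^{\infty}}^{(r-q)/r}\,\|u\|_{L^{q}}^{q/r}$, and a short algebraic check confirms $\alpha(r-q)/r=\vartheta$, finishing \eqref{sub}.

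\textbf{Conformal case $p=N$.} This is the main obstacle, since the critical embedding is unavailable. My plan is a Moser-type self-improvement: apply the already-proved subcritical inequality \eqref{sub} with an auxiliary exponent $p_{0}<N$ to the power $v:=u^{\gamma}$ ($\gamma>1$), and handle the resulting gradient term via H\"older as
\[
\|\nabla v\|_{L^{p_{0}}}\le \gamma\,\|u\|_{L^{(\gamma-1)p_{0}^{*}}}^{\gamma-1}\,\|\nabla u\|_{L^{N}},
\]
using the exponent relation $1/p_{0}=1/p_{0}^{*}+1/N$. Tuning the parameters $(\gamma,p_{0})$ so that the auxiliary norm $\|u\|_{L^{(\gamma-1)p_{0}^{*}}}$ coincides with $\|u\|_{L^{r}}$ (an absorption step) while $\|v\|_{L^{q_{0}}}=\|u\|_{L^{q}}^{\gamma}$, rearranging yields \eqref{conforme}. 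The hard part is verifying that the feasibility constraints $p_{0}\in(1,N)$, $q_{0}\ge 1$ and $r_{0}<p_{0}^{*}$ can be met simultaneously for the full admissible range of $(q,r)$.
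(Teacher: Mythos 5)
Your treatment of the subcritical range $1<p<N$ (Sobolev plus interpolation) and the supercritical range $p>N$ (Morrey plus a scaling argument) is essentially the route the paper takes, which simply cites the classical inequalities. One small point in the supercritical case: scaling Morrey's bound $\|u\|_{L^\infty}\le C(\|u\|_{L^p}+\|\nabla u\|_{L^p})$ produces the $L^p$ norm of $u$, not the $L^q$ norm, so you need one additional interpolation-and-absorption step (interpolate $\|u\|_{L^p}$ between $\|u\|_{L^q}$ and $\|u\|_{L^\infty}$ and absorb) before reaching the bound you claim; this is what the paper means by ``combined with a standard homogeneity argument and interpolation in Lebesgue spaces.''

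Where you genuinely diverge is the conformal case $p=N$. The paper runs a Ladyzhenskaya-type argument: starting from the one-dimensional bound $|u|^t\le t\int|u|^{t-1}|u_{x_i}|\,dx_i$, it multiplies over $i=1,\dots,N$, integrates, invokes Gagliardo's lemma and H\"older, and then interpolates. Your Moser-type self-improvement---apply the subcritical inequality to $v=u^\gamma$ with a subcritical exponent $p_0<N$, use H\"older on $\nabla v=\gamma u^{\gamma-1}\nabla u$, and absorb---is a legitimate alternative, and a computation (which you do not carry out, but which does close) shows the absorption reproduces the exponents of \eqref{conforme} exactly.

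However, there is a genuine gap, which you flag but do not resolve. The feasibility constraints you list, $p_0\in(1,N)$, $q_0\ge 1$, $r_0<p_0^*$, \emph{cannot} all be met when $q=1$: absorption forces $q_0=q/\gamma$ with $\gamma>1$, hence $q_0<1$ whenever $q=1$. This is not a corner case---the paper uses \eqref{conforme} precisely with $q=1$ and $r=2N$ in the proof of Proposition~\ref{prop:limitazza}. The fix is simple but must be made explicit: the H\"older interpolation $\|u\|_{L^{r_0}}\le\|u\|_{L^{q_0}}^{1-\vartheta_0}\|u\|_{L^{p_0^*}}^{\vartheta_0}$ together with the Sobolev inequality proves \eqref{sub} for \emph{any} $q_0\in(0,r_0)$, not just $q_0\ge 1$, so the constraint $q_0\ge 1$ should simply be dropped; once that is done the only binding requirement is $1<\gamma<1+\tfrac{r(N-1)}{N}$, which is always satisfiable.
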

\begin{proof}
Inequality \eqref{sub} for $1<p<N$ is well-known and nowadays can be found in many textbooks on Sobolev spaces. The case $p>N$ follows from the well-known {\it Morrey's inequality} (see \cite[Th\'eor\`eme IX.12]{bre})
\[
\|u\|_{L^\infty(\mathbb{R}^N)}\le C\,\left(\|u\|_{L^p(\mathbb{R}^N)}+\|\nabla u\|_{L^p(\mathbb{R}^N)}\right),
\]
combined with a standard homogeneity argument and interpolation in Lebesgue spaces.
\vskip.2cm
On the contrary, the conformal case \eqref{conforme} seems to be more difficult to find in the literature. We provide a simple proof, which is essentially the same as that of the so-called {\it Ladyzhenskaya inequality} (see \cite[Lemma 1, page 10]{La}), corresponding to $q=p=N=2$ and $r=4$.
For every $t>1$ we have
\[
|u|^t\le t\,\int_{-\infty}^{+\infty} |u|^{t-1}\, |u_{x_i}|\,dx_i,\qquad i=1,\dots,N,
\]  
and thus
\[
|u|^\frac{N\,t}{N-1}\le t^\frac{N}{N-1}\,\prod_{i=1}^N \left(\int_{-\infty}^{+\infty} |u|^{t-1}\, |u_{x_i}|\,dx_i\right)^\frac{1}{N-1}.
\]
By integrating over $\mathbb{R}^N$ we get
\[
\begin{split}
\int_{\mathbb{R}^N} |u|^\frac{N\,t}{N-1}\,dx&\le t^\frac{N}{N-1}\,\prod_{i=1}^N \int_{\mathbb{R}^N} \left(\int_{-\infty}^{+\infty} |u|^{t-1}\, |u_{x_i}|\,dx_i\right)^\frac{1}{N-1}\,dx\\
&\le t^\frac{N}{N-1}\,\prod_{i=1}^N \left[\int_{\mathbb{R}^{N-1}} \int_{-\infty}^{+\infty} |u|^{t-1}\,|u_{x_i}|\,dx_i\, d\widehat x_i\right]^\frac{1}{N-1},
\end{split}
\]
where $d\widehat x_i$ denotes integration with respect to all variables but $x_i$. The second inequality is the classical Gagliardo Lemma, see \cite[Lemma 3.3]{Gi}. From the previous estimate, with some elementary manipulations and an application of H\"older inequality we get
\[
\int_{\mathbb{R}^N} |u|^\frac{N\,t}{N-1}\,dx\le t^\frac{N}{N-1}\, \left(\int_{\mathbb{R}^{N}} |u|^{(t-1)\,\frac{N}{N-1}}\,dx\right)\,\left(\int_{\mathbb{R}^N} |\nabla u|^N\,dx\right)^\frac{1}{N-1}.
\]
We now observe that if we take $t> N$, then 
\[
q<(t-1)\,\frac{N}{N-1}<t\,\frac{N}{N-1},
\]
so that by interpolation in Lebesgue spaces
\[
\int_{\mathbb{R}^{N}} |u|^{(t-1)\,\frac{N}{N-1}}\,dx\le \left(\int_{\mathbb{R}^{N}} |u|^q\,dx\right)^{\frac{1-\alpha}{q}\,\frac{(t-1)\,N}{N-1}}\,\left(\int_{\mathbb{R}^{N}} |u|^\frac{N\,t}{N-1}\,dx\right)^{\frac{t-1}{t}\,\alpha},
\]
where
\[
\alpha=\frac{t}{t-1}\,\left(1-\frac{N}{N\,(t-q)+q}\right).
\]
This gives us for $t> N$
\begin{equation}
\label{quasiquasi}
\left(\int_{\mathbb{R}^N} |u|^\frac{N\,t}{N-1}\,dx\right)^{1-\frac{t-1}{t}\,\alpha}\le t^\frac{N}{N-1}\, \left(\int_{\mathbb{R}^{N}} |u|^q\,dx\right)^{\frac{1-\alpha}{q}\,\frac{(t-1)\,N}{N-1}}\,\left(\int_{\mathbb{R}^N} |\nabla u|^N\,dx\right)^\frac{1}{N-1},
\end{equation}
which proves \eqref{conforme} for exponents $r> N^2/(N-1)$. When $q<r\le N^2/(N-1)$, it is sufficient to use once again interpolation in Lebesgue spaces, together with \eqref{quasiquasi}. We leave the details to the reader.
\end{proof}

\section{Properties of the $p-$torsion function}
\label{sec:3}

\subsection{Compact case}

We present some basic properties of the $p-$torsion function when this can be defined variationally, i.e. when the embedding $W^{1,p}_0(\Omega)\hookrightarrow L^1(\Omega)$ is compact.  
For $1<p<N$ we set
\[
\mathcal{S}_{N,p}=\sup_{\phi\in W^{1,p}_0(\mathbb{R}^N)}\left\{\left(\int_{\mathbb{R}^N} |\phi|^{p^*}\,dx\right)^\frac{p}{p^*}\, :\, \int_{\mathbb{R}^N} |\nabla \phi|^p\,dx=1\right\}.
\]
We recall that $\mathcal{S}_{N,p}<+\infty$ and the supremum above is indeed attained.
\begin{proposition}
\label{prop:limitazza}
Let $1<p<+\infty$ and suppose that $W^{1,p}_0(\Omega)\hookrightarrow L^1(\Omega)$ is compact. Then $w_\Omega\in L^\infty(\Omega)$. Moreover, for $1<p<N$ we have
\begin{equation}
\label{apriori}
\|w_\Omega\|_{L^\infty(\Omega)}\le C\, \left(\int_\Omega w_\Omega\,dx\right)^\frac{p'}{N+p'},\qquad \mbox{ with } C=\frac{N+p'}{p'}\,\mathcal{S}_{N,p}^{\frac{N}{N\,(p-1)+p}}.
\end{equation}
\end{proposition}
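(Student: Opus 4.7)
The plan is to deduce the $L^\infty$ bound from a Stampacchia / De Giorgi level-set iteration, exploiting the fact that the right-hand side of $-\Delta_p w_\Omega = 1$ is constant. For each $k\ge 0$, the truncation $(w_\Omega - k)_+$ lies in $W^{1,p}_0(\Omega)$, and testing \eqref{weakeqw} against it gives the clean identity
\[
\int_\Omega |\nabla (w_\Omega - k)_+|^p\,dx \;=\; \int_\Omega (w_\Omega - k)_+\,dx \;=:\; \varphi(k),
\]
that is, \emph{the $p$-Dirichlet energy above level $k$ equals the $L^1$ excess above $k$}. Setting $\mu(k):=|\{w_\Omega>k\}|$, the plan is to convert this identity into a decay estimate of the form $\varphi(k)\le C\,\mu(k)^{1+\alpha}$ and then integrate the resulting differential inequality for $\varphi$.

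Assume $1<p<N$. Applying the Sobolev inequality in $\mathbb{R}^N$ (with best constant $\mathcal{S}_{N,p}$) to $(w_\Omega-k)_+$ extended by zero, then H\"older's inequality on $\{w_\Omega>k\}$ to interpolate $L^1$ between $L^{p^*}$ and $L^0$, and combining with the identity above yields
\[
\varphi(k) \;\le\; \mathcal{S}_{N,p}^{1/(p-1)}\,\mu(k)^{\,1+p/[N(p-1)]},
\]
after using $1-1/p^* = (N(p-1)+p)/(Np)$ to rewrite the exponent. Since $\varphi$ is absolutely continuous, nonincreasing, with $\varphi'(k) = -\mu(k)$ a.e.\ (layer-cake formula), this translates into the separable differential inequality $-\varphi' \ge \mathcal{S}_{N,p}^{-\gamma/(p-1)}\,\varphi^{\gamma}$ with $\gamma = N(p-1)/[N(p-1)+p]$. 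Integrating from $0$ to $k$ and requiring $\varphi(k)\ge 0$ forces $\varphi(k)=0$ as soon as
\[
k \;\ge\; \frac{N(p-1)+p}{p}\,\mathcal{S}_{N,p}^{N/[N(p-1)+p]}\,\varphi(0)^{p/[N(p-1)+p]}.
\]
The algebraic identities $(N(p-1)+p)/p = (N+p')/p'$ and $p/(N(p-1)+p) = p'/(N+p')$, together with $\varphi(0)=\int_\Omega w_\Omega\,dx$, then rewrite the threshold exactly as the right-hand side of \eqref{apriori}.

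For the qualitative $L^\infty$ assertion in the remaining ranges no new idea is required: if $p>N$, Morrey's inequality directly furnishes $W^{1,p}_0(\Omega)\hookrightarrow L^\infty(\Omega)$, so $w_\Omega\in L^\infty$ follows for free; if $p=N$, the same truncation scheme of Steps 1--3 is used with the Sobolev step replaced by the conformal Gagliardo-Nirenberg inequality \eqref{conforme} of Proposition \ref{prop:GNS} at some fixed $r>N$, and the resulting (weaker) super-linear ODE still drives $\varphi$ to $0$ in finite $k$. The main technical obstacle lies in the bookkeeping of Step~3: the soft conclusion $w_\Omega\in L^\infty$ is a routine consequence of Sobolev-type embeddings, but recovering \emph{exactly} the constant $(N+p')/p'\,\mathcal{S}_{N,p}^{N/[N(p-1)+p]}$ demands careful tracking of the dual exponents and the two algebraic identities above; a secondary point to justify is the a.e.\ relation $\varphi'=-\mu$, which is supplied by the layer-cake formula.
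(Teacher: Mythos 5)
Your proof follows the same route as the paper's: test with $(w_\Omega-k)_+$ to get the energy--excess identity, convert it via Sobolev and H\"older into the differential inequality $M(k)^{N/(N+p')}\le \mathcal{S}_{N,p}^{N/(N(p-1)+p)}\,(-M'(k))$ for $M(k)=\int_\Omega(w_\Omega-k)_+\,dx$, integrate the ODE to force $M\equiv 0$ past an explicit threshold, and dispatch $p>N$ via Morrey and $p=N$ via the conformal Gagliardo--Nirenberg inequality. The only small imprecision is in the $p>N$ case: the embedding $W^{1,p}_0(\Omega)\hookrightarrow L^\infty(\Omega)$ does \emph{not} hold for a general open set (e.g.\ $\Omega=\mathbb{R}^N$), and what is actually used, as in the paper, is the interpolated form \eqref{sub} with $q=1$, $r=\infty$, whose right-hand side is finite precisely because the compactness hypothesis gives $w_\Omega\in L^1(\Omega)$.
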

\begin{proof}
For $p>N$, the result follows directly from \eqref{sub} with $q=1$ and $r=+\infty$.
\par
Let us focus on the case $1<p<N$.
We take $k>0$ and test \eqref{weakeqw} with $\phi=(w_\Omega-k)_+$. This gives
\begin{equation}
\label{identita}
\int_\Omega |\nabla (w_\Omega-k)_+|^p\,dx=\int_\Omega (w_\Omega-k)_+\,dx.
\end{equation}
We introduce the notation $\mu(k):=|\{x\in\Omega\, :\, w_\Omega(x)>k\}|$ and observe that $\mu(k)<+\infty$ for $k>0$, since $w_\Omega\in L^1(\Omega)$.
By combining Sobolev and H\"older inequalities, we get
\[
\int_\Omega |\nabla (w_\Omega-k)_+|^p\,dx\ge \mathcal{S}_{N,p}^{-1}\,\mu(k)^{1-p-\frac{p}{N}}\,\left(\int_\Omega (w_\Omega-k)_+\,dx\right)^p.
\]
Thus from \eqref{identita} we obtain
\[
\left(\int_k^{+\infty} \mu(t)\,dt\right)^{p-1}\le \mathcal{S}_{N,p}\, \mu(k)^{p+\frac{p}{N}-1},
\]
If we set
\[
M(k)=\int_k^\infty \mu(t)\,dt,
\]
the previous estimate can be written as
\[
M(k)^\frac{N}{N+p'}\le \mathcal{C}\, (-M'(k)),\qquad \mbox{ with } \mathcal{C}:=\mathcal{S}_{N,p}^\frac{N}{N\,(p-1)+p},
\]
where $p'=p/(p-1)$. 
If we fix $k_0\ge 0$, this implies that we have
\[
M(k)^\frac{p'}{N+p'}\le M(k_0)^\frac{p'}{N+p'}+\frac{p'}{N+p'}\,\frac{1}{\mathcal{C}}\,(k_0-k),\qquad \mbox{ for every }k\ge k_0.
\]
The previous inequality implies that
\[
M(k)\equiv 0,\qquad \mbox{ for } k\ge \frac{N+p'}{p'}\,\mathcal{C}\,M(k_0)^\frac{p'}{N+p'}+k_0,
\]
and thus
\[
\mu(k)\equiv 0,\qquad \mbox{ for } k\ge \frac{N+p'}{p'}\,\mathcal{C}\,M(k_0)^\frac{p'}{N+p'}+k_0.
\]
This finally gives
\[
0\le w_\Omega(x)\le \frac{N+p'}{p'}\,\mathcal{C}\,M(k_0)^\frac{p'}{N+p'}+k_0=k_0+\frac{N+p'}{p'}\,\mathcal{C}\,\left( \int_\Omega (w_\Omega-k_0)_+\,dx\right)^\frac{p'}{N+p'}.
\]
By arbitrariness of $k_0$ we thus get the $L^\infty-L^1$ estimate \eqref{apriori}, as desired.
\vskip.2cm\noindent
Finally, for the case $p=N$, we start again by testing the equation with $(w_\Omega-k)_+$. Then to estimate the right-hand side of \eqref{identita}, we now use inequality \eqref{conforme} with $q=1$ and $r=2\,N$. This gives
\[
\begin{split}
\left(\int_\Omega (w_\Omega-k)_+^{2\,N}\,dx\right)^\frac{1}{2\,N}&\le C_2\, \left(\int_\Omega |\nabla (w_\Omega-k)_+|^N\,dx\right)^\frac{2\,N-1}{2\,N^2}\,\left(\int_{\Omega} (w_\Omega-k)_+\,dx\right)^\frac{1}{2\,N}\\
&=C_2\, \left(\int_\Omega (w_\Omega-k)_+\,dx\right)^\frac{3\,N-1}{2\,N^2},
\end{split}
\]
thanks to \eqref{identita}, too. Similarly as before, after some manipulations we get
\[
\left(\int_\Omega (w_\Omega-k)_+\,dx\right)^{\frac{N-1}{N}}\le C_2\, \mu(k).
\]
Then the proof goes as in the previous case.
\end{proof}
We list some composition properties of $w_\Omega$ that will be used many times.
\begin{lemma}
\label{lm:VF}
Let $1<p<+\infty$ and suppose that $W^{1,p}_0(\Omega)\hookrightarrow L^1(\Omega)$ is compact. Then: 
\begin{itemize}
\item[{\it (i)}] for every $0<\beta\le (p-1)/p$, we have $w_\Omega^\beta\not\in W^{1,p}(\Omega)$;
\vskip.2cm
\item[{\it (ii)}] $\log( w_\Omega)\not\in W^{1,p}(\Omega)$;
\vskip.2cm
\item[{\it (iii)}] for every $(p-1)/p<\beta<1$, we have $w_\Omega^\beta\in W^{1,p}_0(\Omega)$, provided $w_\Omega^{\beta\,p-p+1}\in L^1(\Omega)$;
\vskip.2cm
\item[{\it (iv)}] for every $\beta\ge 1$, we have $w^\beta_\Omega\in W^{1,p}_0(\Omega)$.
\end{itemize}
\end{lemma}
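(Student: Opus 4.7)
The proofs all rest on testing the $p$-Laplace equation~\eqref{weakeqw} for $w_\Omega$ against Lipschitz regularizations of powers or logarithms of $w_\Omega$ and then letting the regularization parameter tend to zero, relying throughout on $w_\Omega\in L^\infty(\Omega)$ from Proposition~\ref{prop:limitazza}. Claim~\emph{(iv)} is immediate: for $\beta\ge 1$ the map $s\mapsto s^\beta$ is Lipschitz on $[0,\|w_\Omega\|_{L^\infty}]$ and vanishes at zero, so the standard Sobolev chain rule for Lipschitz truncations gives $w_\Omega^\beta\in W^{1,p}_0(\Omega)$.

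For~\emph{(iii)}, set $\alpha:=\beta\,p-p+1\in(0,1)$ and take as test function $\phi_\varepsilon:=(w_\Omega+\varepsilon)^\alpha-\varepsilon^\alpha$, which is a Lipschitz function of $w_\Omega$ vanishing at $0$ (with Lipschitz constant $\alpha\,\varepsilon^{\alpha-1}$) and therefore lies in $W^{1,p}_0(\Omega)$. Inserting $\phi_\varepsilon$ into~\eqref{weakeqw} and using subadditivity $(a+b)^\alpha\le a^\alpha+b^\alpha$ one obtains
\[
\alpha\int_\Omega(w_\Omega+\varepsilon)^{\alpha-1}|\nabla w_\Omega|^p\,dx=\int_\Omega\bigl[(w_\Omega+\varepsilon)^\alpha-\varepsilon^\alpha\bigr]\,dx\le\int_\Omega w_\Omega^\alpha\,dx<+\infty,
\]
the last bound being the hypothesis $w_\Omega^{\beta\,p-p+1}\in L^1(\Omega)$. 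Since $\alpha-1<0$, the left integrand is monotonically nondecreasing as $\varepsilon\searrow 0$, so monotone convergence yields $\int_\Omega w_\Omega^{\alpha-1}|\nabla w_\Omega|^p\,dx<+\infty$, i.e.\ $\int_\Omega|\nabla w_\Omega^\beta|^p\,dx<+\infty$. To promote this to $w_\Omega^\beta\in W^{1,p}_0(\Omega)$, I would show that the Lipschitz approximants $(w_\Omega+\varepsilon)^\beta-\varepsilon^\beta\in W^{1,p}_0(\Omega)$ form a Cauchy sequence in the gradient norm (dominated convergence with dominant $\beta\,w_\Omega^{\beta-1}|\nabla w_\Omega|\in L^p(\Omega)$), and identify their limit with $w_\Omega^\beta$ via pointwise convergence.

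The negative statements~\emph{(i)} and~\emph{(ii)} both reduce to the single fact
\[
\int_\Omega\frac{|\nabla w_\Omega|^p}{w_\Omega}\,dx=+\infty,
\]
which I would establish by inserting the admissible test function $\log(1+w_\Omega/\varepsilon)\in W^{1,p}_0(\Omega)$ into~\eqref{weakeqw}, producing
\[
\int_\Omega\frac{|\nabla w_\Omega|^p}{w_\Omega+\varepsilon}\,dx=\int_\Omega\log(1+w_\Omega/\varepsilon)\,dx,
\]
and sending $\varepsilon\to 0^+$: the right-hand side blows up because $w_\Omega$ is a nontrivial minimizer, so $|\{w_\Omega>\delta\}|>0$ for some $\delta>0$ and the integrand is at least $\log(1+\delta/\varepsilon)$ on that set, while the left-hand side monotonically increases to $\int_\Omega|\nabla w_\Omega|^p/w_\Omega\,dx$. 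Given this, claim~\emph{(i)} follows from the chain rule: if $w_\Omega^\beta\in W^{1,p}(\Omega)$ for some $0<\beta\le(p-1)/p$, then $|\nabla w_\Omega^\beta|^p=\beta^p\,w_\Omega^{\beta\,p-p}|\nabla w_\Omega|^p$, and since $\beta\,p-p+1\le 0$ combined with $w_\Omega\le\|w_\Omega\|_{L^\infty}$ gives the lower bound $w_\Omega^{\beta\,p-p}\ge\|w_\Omega\|_{L^\infty}^{\beta\,p-p+1}\,w_\Omega^{-1}$, forcing $\int_\Omega|\nabla w_\Omega^\beta|^p\,dx=+\infty$, a contradiction. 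Claim~\emph{(ii)} is handled identically, comparing $|\nabla\log w_\Omega|^p=w_\Omega^{-p}|\nabla w_\Omega|^p$ to $w_\Omega^{-1}|\nabla w_\Omega|^p$ through $w_\Omega\le\|w_\Omega\|_{L^\infty}$.

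The main technical obstacle I foresee is the rigorous $W^{1,p}_0$ identification of the limit in~\emph{(iii)} together with the matching use of the chain rule for the hypothetical Sobolev function $w_\Omega^\beta$ in~\emph{(i)}, since $s\mapsto s^\beta$ is only H\"older, not Lipschitz, at the origin; both should nevertheless go through by standard truncation and approximation arguments for Sobolev functions composed with continuous maps that vanish at zero.
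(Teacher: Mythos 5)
Your proof is correct and uses the same basic machinery as the paper (testing the weak formulation of $-\Delta_p w_\Omega=1$ against $\varepsilon$-regularized compositions of $w_\Omega$, relying on $w_\Omega\in L^\infty$ from Proposition~\ref{prop:limitazza}), but you organize the negative parts differently and more efficiently. The paper proves $(i)$ for $0<\beta<(p-1)/p$, the borderline $\beta=(p-1)/p$, and $(ii)$ as three separate runs of an $\varepsilon$-regularized test-function computation (a power test function for $\beta<(p-1)/p$ and for $\beta=0$, the logarithmic test function for the borderline case). You instead prove \emph{once} that
\[
\int_\Omega\frac{|\nabla w_\Omega|^p}{w_\Omega}\,dx=+\infty
\]
via the logarithmic test function, and then deduce both $(i)$ and $(ii)$ by the pointwise comparison $w_\Omega^{\beta p-p}\ge \|w_\Omega\|_{L^\infty}^{\beta p-p+1}\,w_\Omega^{-1}$ (valid precisely because $\beta p-p+1\le 0$ when $\beta\le (p-1)/p$), and likewise $w_\Omega^{-p}\ge \|w_\Omega\|_{L^\infty}^{1-p}\,w_\Omega^{-1}$. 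This unification is a genuine, if modest, simplification: one divergence, one comparison, all negative cases. For $(iii)$ you also finish slightly differently — you upgrade the uniform gradient bound to strong $L^p$ convergence of $\nabla[(w_\Omega+\varepsilon)^\beta-\varepsilon^\beta]$ via monotone/dominated convergence, whereas the paper invokes weak compactness and weak closedness of $W^{1,p}_0(\Omega)$; both are fine, and yours gives a bit more (strong convergence). Your closing caveat about the chain rule for the non-Lipschitz map $s\mapsto s^\beta$ is a fair observation but is harmless here: $w_\Omega>0$ in $\Omega$ by the strong minimum principle and $w_\Omega\in C^{1,\alpha}_{loc}(\Omega)$, so $w_\Omega^\beta$ is $C^1_{loc}$ with classical gradient $\beta\,w_\Omega^{\beta-1}\nabla w_\Omega$, and any putative weak gradient in $W^{1,p}(\Omega)$ must agree with it; the same observation covers the identification step in $(iii)$.
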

\begin{proof}
We treat each case separately.
\vskip.2cm\noindent
\underline{\it (i) Case $0<\beta\le (p-1)/p$.} Let us first assume that
\[ 
0<\beta<\frac{p-1}{p}.
\]
In this case, let us define the function
\begin{equation}
\label{phieps}
\varphi_\varepsilon=(w_\Omega+\varepsilon)^{\beta\,p-p+1}-\varepsilon^{\beta\,p-p+1},
\end{equation}
for $\varepsilon>0$. Notice that $\varphi_\varepsilon\in W^{1,p}_0(\Omega)$, since this is the composition of $w_\Omega$ with the $C^1$ function 
\[
\psi_\varepsilon(t)=(t+\varepsilon)^{\beta\,p-p+1}-\varepsilon^{\beta\,p-p+1},\qquad t\ge 0,
\]
which is globally Lipschitz continuous on $[0,+\infty)$ and such that $\psi_\varepsilon(0)=0$.
Plugging $\varphi_\varepsilon$  as a test function in \eqref{weakeqw} we get
\[
(\beta\,p-p+1)\, \int_\Omega |\nabla w_\Omega|^p\, (w_\Omega+\varepsilon)^{\beta\,p-p}\,dx= \int_\Omega \frac{\varepsilon^{p-1-\beta\,p}-(w_\Omega+\varepsilon)^{p-1-\beta\,p}}{\varepsilon^{p-1-\beta\,p}\,(w_\Omega+\varepsilon)^{p-1-\beta\,p}}\,dx,
\]
that is
\begin{equation}
\label{uno}
(p-1-\beta\,p)\,\int_\Omega \frac{|\nabla w_\Omega|^p}{(w_\Omega+\varepsilon)^{p-\beta p}}\,dx=\frac{1}{\varepsilon^{p-1-\beta\,p}}\,\int_\Omega \frac{(w_\Omega+\varepsilon)^{p-1-\beta\,p}-\varepsilon^{p-1-\beta\,p}}{(w_\Omega+\varepsilon)^{p-1-\beta\,p}}\,dx.
\end{equation}
From Proposition \ref{prop:limitazza}, we already know that $w_\Omega\in L^\infty(\Omega)$, then we take 
\[
\tau=\frac{\|w_\Omega\|_{L^\infty(\Omega)}}{2},\qquad \mbox{ so that}\quad A:=|\{x\in \Omega\, :\, w_\Omega>\tau\}|>0.
\]
Then from \eqref{uno} we get
\[
\begin{split}
(p-1-\beta\,p)\,\int_\Omega \frac{|\nabla w_\Omega|^p}{(w_\Omega+\varepsilon)^{p-\beta p}}\,dx&=\frac{1}{\varepsilon^{p-1-\beta\,p}}\,\int_\Omega
\left[1-\left(\frac{\varepsilon}{w_\Omega+\varepsilon}\right)^{p-1-\beta\,p}\right]\,dx\\
&\ge \frac{1}{\varepsilon^{p-1-\beta\,p}}\,\int_{\{w_\Omega>\tau\}}
\left[1-\left(\frac{\varepsilon}{\tau+\varepsilon}\right)^{p-1-\beta\,p}\right]\,dx \\
&=\left[1-\left(\frac{\varepsilon}{\tau+\varepsilon}\right)^{p-1-\beta\,p}\right]\,\frac{A}{\varepsilon^{p-1-\beta\,p}}.\\
\end{split}
\]
By taking the limit as $\varepsilon$ goes to $0$ in the previous estimate and using the Monotone Convergence Theorem, we get
\[
\int_\Omega \frac{|\nabla w_\Omega|^p}{w_\Omega^{p-\beta p}}\,dx=+\infty.
\]
This finally shows that $\nabla w_\Omega^\beta\not\in L^p(\Omega)$ for $0<\beta<(p-1)/p$.
\par
To treat the borderline case $\beta=(p-1)/p$, we insert in \eqref{weakeqw} the test function
\[
\phi=\log (w_\Omega+\varepsilon)-\log(\varepsilon),
\]
for $\varepsilon>0$.
In this case we obtain
\[
\int_\Omega \frac{|\nabla w_\Omega|^p}{(w_\Omega+\varepsilon)}\, dx=\int_\Omega \log\left(1+\frac{w_\Omega}{\varepsilon}\right)\,dx,
\]
and reasoning as before we get again the desired conclusion.
\vskip.2cm\noindent
\underline{\it (ii) The logarithm.}
To prove that $\log( w_\Omega)\not\in W^{1,p}(\Omega)$, it is sufficient to reproduce the proof above with $\beta=0$. 
\vskip.2cm\noindent
\underline{\it (iii) Case $(p-1)/p<\beta<1$.} 
We test once again \eqref{weakeqw} with $\varphi_\varepsilon$ defined in \eqref{phieps}. In this case we get the equality
\[
(\beta\, p-p+1)\int_\Omega |\nabla w_\Omega|^p\,(w_\Omega+\varepsilon)^{\beta\, p-p}\,dx=\int_\Omega \left(\left(w_\Omega+\varepsilon\right)^{\beta p-p+1}-\varepsilon^{\beta p-p+1}\right)\,dx.
\]
From the previous, with simple manipulations and using the subadditivity of $\tau\mapsto \tau^{\beta\,p-p+1}$ we get for every $\varepsilon>0$
\[
\begin{split}
\int_\Omega \left|\nabla \left((w_\Omega+\varepsilon)^\beta-\varepsilon^\beta\right)\right|^p\,dx&\le \frac{\beta^p}{\beta\, p-p+1}\,\int_\Omega \left(\left(w_\Omega+\varepsilon\right)^{\beta\,p-p+1}-\varepsilon^{\beta\,p-p+1}\right)\,dx\\
&\le \frac{\beta^p}{\beta\, p-p+1}\,\int_\Omega w_\Omega^{\beta\,p-p+1}\,dx,
\end{split}
\]
and the latter is finite by hypothesis. Thus the net 
\[
\left\{(w_\Omega+\varepsilon)^\beta-\varepsilon^\beta\right\}_{\varepsilon>0},
\]
is uniformly bounded in $W^{1,p}_0(\Omega)$. Since the latter is a weakly closed space, we get that $w_\Omega^\beta\in W^{1,p}_0(\Omega)$ as desired.
\vskip.2cm\noindent
\underline{\it (iv) Case $\beta \ge 1$.}
This is the simplest case. By Proposition \ref{prop:limitazza} $w_\Omega\in L^\infty(\Omega)$, then $w_\Omega^\beta$ is just the composition of a $C^1$ function vanishing at $0$ with a function in $W^{1,p}_0(\Omega)\cap L^\infty(\Omega)$. This gives $w^\beta_\Omega\in W^{1,p}_0(\Omega)$.
\end{proof}
\begin{remark}
The requirement $w_\Omega^{\beta\,p-p+1}\in L^1(\Omega)$ in point {\it (iii)} of the previous Lemma is necessary. 
Indeed, for every $(p-1)/p<\beta<1$, it is possible to construct an open set $\Omega\subset\mathbb{R}^N$ such that 
$W^{1,p}_0(\Omega)\hookrightarrow L^1(\Omega)$ is compact,
but $w_\Omega^{\beta}\not\in L^1(\Omega)$. An instance of such a set is presented in Remark \ref{rem:trenoedisperazione} below.	 
\end{remark}

\subsection{General case}

We already said that in general $w_\Omega$ could reduce to the trivial function which is $+\infty$ everywhere on $\Omega$. The following elegant and simple result, suggested to us by Guido De Philippis (see \cite{DeltaPhi}), gives a sufficient condition to avoid this trivial situation. 
\begin{lemma}[Propagation of finiteness]
\label{lm:guidone}
Let $1<p<+\infty$ and let $\Omega\subset\mathbb{R}^N$ be an open set. Let us suppose that there exist $R_0>R_\Omega$, $x_0\in\Omega\cap B_{R_0}$ and $M>0$ such that
\begin{equation}
\label{punto}
w_{\Omega,R}(x_0)\le M,\qquad \mbox{ for every } R\ge R_0. 
\end{equation}
Then $w_\Omega\in L^\infty_{loc}(\Omega_{x_0})$, where $\Omega_{x_0}$ is the connected component of $\Omega$ containing $x_0$.
\end{lemma}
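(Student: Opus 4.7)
The strategy is to propagate the pointwise bound \eqref{punto} throughout the connected component $\Omega_{x_0}$ by means of Harnack's inequality for the $p$-Laplacian combined with a chain-of-balls argument. We shall rely on the following classical Harnack inequality (due to Serrin and Trudinger) for nonnegative solutions of the nonhomogeneous equation $-\Delta_p u = 1$: there exists $C = C(N,p) > 0$ such that, whenever $u \ge 0$ weakly satisfies $-\Delta_p u = 1$ on a ball $B_{4r}(y)$, one has
$$
\sup_{B_r(y)} u \;\le\; C\,\bigl(\inf_{B_r(y)} u + r^{p/(p-1)}\bigr).
$$
The crucial point is that $C$ depends only on $N$ and $p$, and that the inhomogeneous term $r^{p/(p-1)}$ reflects the natural scaling of the torsion function already remarked upon after \eqref{THI}.

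Fix $y \in \Omega_{x_0}$. Since $\Omega_{x_0}$ is open and connected, hence path-connected, there is a continuous curve in $\Omega_{x_0}$ joining $x_0$ to $y$. By compactness of the curve and openness of $\Omega_{x_0}$, we can select points $x_0 = y_0, y_1, \dots, y_n = y$ and a radius $r > 0$ such that $\overline{B_{4r}(y_i)} \subset \Omega_{x_0}$ for each $i$, and $y_{i+1} \in B_r(y_i)$ for each $i = 0, \dots, n-1$. Next, pick $R_1 \ge R_0$ so that $\overline{B_{4r}(y_i)} \subset \Omega \cap B_R$ for every $i$ and every $R \ge R_1$. For such $R$, the function $w_{\Omega,R}$ is a nonnegative weak solution of $-\Delta_p u = 1$ on each $B_{4r}(y_i)$, so applying Harnack inductively yields
$$
w_{\Omega,R}(y_{i+1}) \;\le\; \sup_{B_r(y_i)} w_{\Omega,R} \;\le\; C\,\bigl(w_{\Omega,R}(y_i) + r^{p/(p-1)}\bigr).
$$
Iterating $n$ times with $w_{\Omega,R}(x_0) \le M$ produces a constant $M_y$ depending only on $M$, $N$, $p$, $n$, $r$ (in particular independent of $R \ge R_1$) such that $\sup_{B_r(y)} w_{\Omega,R} \le M_y$. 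Letting $R \to \infty$ and invoking the monotone definition \eqref{torsioneee} gives $w_\Omega \le M_y$ on all of $B_r(y)$.

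This shows every $y \in \Omega_{x_0}$ admits a neighborhood on which $w_\Omega$ is bounded; a standard finite-cover argument then upgrades the statement to $w_\Omega \in L^\infty_{loc}(\Omega_{x_0})$, completing the proof. The main obstacle is not the chain construction itself, which is routine once set up, but ensuring the correct form of Harnack's inequality is available for the nonhomogeneous $p$-Laplace equation with a constant uniform in $R$; this is classical but requires careful invocation of the quasilinear regularity literature. A minor subtlety worth noting is that the additive term $r^{p/(p-1)}$ in Harnack prevents the iteration from being purely multiplicative, but since only finitely many steps are taken for each fixed $y$, the cumulative error remains bounded.
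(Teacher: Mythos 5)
Your proof is correct and follows essentially the same strategy as the paper's: propagating the pointwise bound at $x_0$ to all of $\Omega_{x_0}$ via the Harnack inequality (with the $r^{p/(p-1)}$ inhomogeneous term) for nonnegative weak solutions of $-\Delta_p u = 1$, uniformly in $R$, and then passing to the monotone limit. The paper invokes the compact-set form of Harnack in a single step on a compact $K' \ni x_0$, whereas you unpack that same estimate via a chain of balls; this is the identical argument at a slightly finer level of detail.
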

\begin{proof}
We first observe that the pointwise condition \eqref{punto} does make sense, since each function $w_{\Omega,R}$ is indeed $C^{1,\alpha}_{loc}(\Omega\cap B_R)$ for some $0<\alpha<1$, thanks to standard regularity results for the $p-$Laplacian. In this respect, a classical reference is \cite{DiB}.
\par
Let $K\Subset \Omega_{x_0}$ be a compact set, then there exists a larger compact set $K\subset K'\Subset\Omega_{x_0}$ such that $x_0\in K'$. We take $R_1\ge R_0$ large enough, so that $\Omega\cap B_{R_1}$ contains $K'$. By Harnack inequality, we have
\[
\begin{split}
\sup_{K} w_{\Omega,R}\le\sup_{K'} w_{\Omega,R}&\le C_{K'}\,\left(\inf_{K'} w_{\Omega,R}+|K'|^{\frac{1}{N}\,\frac{p}{p-1}}\right)\\
&\le C_{K'}\, w_{\Omega,R}(x_0)+C_{K'}\,|K'|^{\frac{1}{N}\,\frac{p}{p-1}}\le C,\qquad \mbox{ for }R\ge R_1,
\end{split}
\]
where $C=C(N,p,K',M)>0$.
This ends the proof.
\end{proof}
In general it is not true that \eqref{punto} implies $w_\Omega\in L^\infty_{loc}(\Omega)$, unless $\Omega$ is connected as shown in the next simple counterexample.
\begin{example}
Let us consider
\[
\Omega=B_1\cup \{x\in\mathbb{R}^N\, :\, x_N>2\}.
\]
In this case we have 
\[
w_\Omega(0)=\frac{p-1}{p}\, N^{-\frac{1}{p-1}}\qquad \mbox{ and }\qquad w_\Omega=+\infty\quad \mbox{ on } \{x\in\mathbb{R}^N\, :\, x_N>2\}.
\]
\end{example}
We present now a sufficient condition for the function $w_\Omega$ defined by \eqref{torsioneee} to be a (local) weak solution of
\[
-\Delta_p w=1.
\]
\begin{proposition}
\label{prop:vera}
Let $1<p<+\infty$ and let $\Omega\subset\mathbb{R}^N$ be an open set. Let us suppose that $w_\Omega\in L^1_{loc}(\Omega)$.
Then 
\begin{equation}
\label{Lp}
\nabla w_\Omega\in L^p_{loc}(\Omega).
\end{equation}
Moreover, $w_\Omega$ is a local weak solution of \eqref{weakeqw}, i.e. for every $\Omega'\Subset\Omega$ and every $\phi\in C^\infty_0(\Omega')$ there holds
\[
\int \left\langle |\nabla w_\Omega|^{p-2}\,\nabla w_\Omega,\nabla\phi\right\rangle\,dx=\int\phi\,dx. 
\]
\end{proposition}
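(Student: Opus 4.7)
The strategy is to approximate $w_\Omega$ by the monotone family $\{w_{\Omega,R}\}$ as $R\to+\infty$, first producing uniform local $W^{1,p}$ bounds and then passing to the limit in the equations $-\Delta_p w_{\Omega,R}=1$ via a standard monotonicity argument. Fix $\Omega'\Subset\Omega''\Subset\Omega$ and $R_0>R_\Omega$ such that $\Omega''\subset\Omega\cap B_R$ for every $R\ge R_0$.

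\emph{Uniform local bounds.} Since $w_{\Omega,R}\nearrow w_\Omega$ pointwise and $w_\Omega\in L^1_{\rm loc}(\Omega)$, monotone convergence gives $\sup_{R\ge R_0}\|w_{\Omega,R}\|_{L^1(\Omega'')}<+\infty$. Each $w_{\Omega,R}\ge 0$ is a weak subsolution of $-\Delta_p w=1$ on $\Omega\cap B_R\supset\Omega''$, so the classical local boundedness estimate for subsolutions of the $p$-Laplacian (Trudinger--Serrin, see e.g.\ \cite{DiB}) yields a constant $C=C(N,p,\Omega',\Omega'')$ independent of $R$ such that
\[
\|w_{\Omega,R}\|_{L^\infty(\Omega')}\le C\left(\|w_{\Omega,R}\|_{L^1(\Omega'')}+1\right),\qquad R\ge R_0.
\]
Now choose $\eta\in C_0^\infty(\Omega'')$ with $0\le\eta\le 1$ and $\eta\equiv 1$ on a neighborhood of $\Omega'$. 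The function $\eta^p\,w_{\Omega,R}$ lies in $W^{1,p}_0(\Omega\cap B_R)$ and is admissible in \eqref{weakeqw}. Expanding and absorbing the resulting cross term by Young's inequality produces the Caccioppoli-type estimate
\[
\int \eta^p\,|\nabla w_{\Omega,R}|^p\,dx\;\le\;C\int w_{\Omega,R}^p\,|\nabla\eta|^p\,dx+2\int \eta^p\, w_{\Omega,R}\,dx,
\]
whose right-hand side is uniformly bounded in $R$ by the $L^\infty$ bound just proved. This gives \eqref{Lp}, and combined with the pointwise limit yields $w_{\Omega,R}\rightharpoonup w_\Omega$ in $W^{1,p}(\Omega')$.

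\emph{Passage to the limit.} The vector field $|\nabla w_{\Omega,R}|^{p-2}\,\nabla w_{\Omega,R}$ is uniformly bounded in $L^{p'}(\Omega')$, hence converges weakly along a subsequence to some $\sigma$. Plugging any $\phi\in C_0^\infty(\Omega')$ into \eqref{weakeqw} (admissible for $R$ large) and letting $R\to+\infty$ gives $\int\langle\sigma,\nabla\phi\rangle\,dx=\int\phi\,dx$. To identify $\sigma=|\nabla w_\Omega|^{p-2}\,\nabla w_\Omega$, I apply Minty's trick: for $\zeta\in C_0^\infty(\Omega')$ with $0\le\zeta\le 1$, the function $\zeta\,(w_{\Omega,R}-w_\Omega)$ is an admissible test function (valid thanks to \eqref{Lp}), and using strong $L^p_{\rm loc}$ convergence $w_{\Omega,R}\to w_\Omega$ from Rellich one obtains
\[
\lim_{R\to\infty}\int\zeta\,\bigl(|\nabla w_{\Omega,R}|^{p-2}\nabla w_{\Omega,R}-|\nabla w_\Omega|^{p-2}\nabla w_\Omega\bigr)\cdot(\nabla w_{\Omega,R}-\nabla w_\Omega)\,dx=0.
\]
The strict monotonicity of $\xi\mapsto|\xi|^{p-2}\xi$ then forces $\nabla w_{\Omega,R}\to\nabla w_\Omega$ almost everywhere on $\{\zeta>0\}$, and by the arbitrariness of $\zeta$ almost everywhere on $\Omega'$. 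Therefore $\sigma=|\nabla w_\Omega|^{p-2}\nabla w_\Omega$, and $w_\Omega$ is a local weak solution of $-\Delta_p w=1$.

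The principal technical difficulty is the Minty passage through the nonlinear term, which is standard for quasilinear equations but needs to be handled with care here, since $w_\Omega$ is not in $W^{1,p}_0(\Omega)$ in general and is accessed only through the monotone approximations $w_{\Omega,R}$; the cutoff $\zeta$ is introduced precisely to turn $w_{\Omega,R}-w_\Omega$ into an admissible test function for the equations on $\Omega\cap B_R$.
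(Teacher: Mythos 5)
Your proof is correct, and for the second half it takes a genuinely different route from the paper's. For the uniform local $W^{1,p}$ bound you argue essentially as the paper does: a Caccioppoli estimate with test function $\eta^p\,w_{\Omega,R}$, whose right-hand side is controlled once one has a uniform local $L^\infty$ bound on the $w_{\Omega,R}$; the paper obtains that bound by invoking Lemma \ref{lm:guidone} (Harnack), you invoke the local boundedness estimate for subsolutions — both standard and interchangeable (a small exposition point: you derive the uniform $L^\infty$ bound on $\Omega'$ but then need it on $\mathrm{supp}\,\eta\subset\Omega''$, so you should either state it on $\Omega''$ with yet another intermediate set $\Omega'''$ or take $\eta$ supported strictly inside $\Omega'$). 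The real divergence is in identifying the weak limit of $|\nabla w_{\Omega,R}|^{p-2}\nabla w_{\Omega,R}$. The paper goes through higher local differentiability of $p$-harmonic type functions (the estimates \eqref{p<2} of Acerbi--Fusco for $1<p<2$, and \eqref{p>2} for $p>2$), which upgrades weak to strong $L^p_{\rm loc}$ convergence of the gradients and then passes to the limit directly; for $p=2$ the paper does not even need this, by linearity. You instead use the Minty monotonicity argument: testing the equation for $w_{\Omega,R}$ against $\zeta(w_{\Omega,R}-w_\Omega)$ (admissible since $w_\Omega\in W^{1,p}_{\rm loc}\cap L^\infty_{\rm loc}$), subtracting the corresponding pairing with $|\nabla w_\Omega|^{p-2}\nabla w_\Omega$ (which vanishes in the limit by weak convergence of the gradients), and using strong $L^p_{\rm loc}$ convergence of $w_{\Omega,R}$ to kill the cut-off cross term and the right-hand side. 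Strict monotonicity then forces $\nabla w_{\Omega,R}\to\nabla w_\Omega$ a.e. (along a subsequence), which together with the uniform $L^{p'}_{\rm loc}$ bound identifies the weak limit. Your approach is more elementary and self-contained — it avoids the Calder\'on--Zygmund-type higher differentiability machinery and treats all $1<p<\infty$ uniformly — at the cost of being slightly less informative, since the paper's route actually yields strong $L^p_{\rm loc}$ convergence of $\nabla w_{\Omega,R}$, which is a stronger conclusion than what is needed here. Also note that the strong $L^p_{\rm loc}$ convergence $w_{\Omega,R}\to w_\Omega$ you use does not really require Rellich: it already follows from monotone pointwise convergence together with the uniform local $L^\infty$ bound and dominated convergence.
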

\begin{proof}
To prove \eqref{Lp} it suffices to show that for every open set $\Omega'\Subset\Omega$, there exists a constant $C_{\Omega'}>0$ such that 
\begin{equation}
\label{gradienti}
\|\nabla w_{\Omega,R}\|_{L^{p}(\Omega')}\le C_{\Omega'},\qquad \mbox{ for every } R>\rho_{\Omega'}:=\min\{\rho\in [0,\infty)\, :\, \Omega'\Subset B_\rho\}.
\end{equation}
Indeed, if this were true, by weak convergence (up to a subsequence) of the gradients for every $\phi\in C^\infty_0(\Omega')$  we would get
\[
\lim_{R\to\infty}\int_\Omega\left(w_{\Omega,R}\right)_{x_j}\phi\,dx=-\lim_{R\to\infty}\int_\Omega w_{\Omega,R}\, \phi_{x_j}\,dx=-\int_\Omega w_\Omega\, \phi_{x_j}\,dx,
\]
which implies that $\nabla w_\Omega\in L^p_{loc}(\Omega)$. Observe that we used that $w_\Omega\in L^1_{loc}(\Omega)$ to pass to the limit.
\par
To show the uniform bound \eqref{gradienti}, 
we choose $\Omega'\Subset\Omega_1\Subset\Omega$ and a positive cut-off function $\eta\in C^\infty_0(\Omega_1)$ such that
\[
\eta\equiv 1 \mbox{ on } \Omega',\qquad\qquad|\nabla\eta|\le \frac{4}{\mathrm{dist}(\Omega',\partial \Omega_1)}.
\]  
Then, for a fixed $R\ge \rho_{\Omega_1}$, we insert the test function $\phi=w_{\Omega,R}\,\eta^p$ in the weak formulation of the equation solved by $w_{\Omega,R}$. Observe that this is an admissible test function, since it is supported in $\Omega_1\Subset\Omega\cap B_R$. With simple manipulations, we get
\begin{equation}
\label{caccioppoli}
\begin{split}
\int_{\Omega} |\nabla w_{\Omega,R}|^p\,\eta^p\,dx&\le \int_{\Omega}\eta^p w_{\Omega,R}\,dx+p\, \int_\Omega \eta^{p-1}|\nabla w_{\Omega,R}|^{p-1}\,|\nabla \eta|\,  w_{\Omega,R}\,dx\\
&\le \int_{\Omega_1} w_{\Omega}\,dx+\varepsilon^{1-p}\,\int_{\Omega} |w_{\Omega}|^p\,|\nabla \eta|^p\,dx\\
&+(p-1)\,\varepsilon\,\int_\Omega |\nabla w_{\Omega,R}|^p\,\eta^p\,dx,
\end{split}
\end{equation}
where we also used that $w_{\Omega,R}\le w_\Omega$ by construction.
The last term can be absorbed in the left-hand side of \eqref{caccioppoli} by taking $\varepsilon>0$ small enough.  
Thus we end up with
\begin{equation}
\label{LpLp}
\int_{\Omega'} |\nabla w_{\Omega,R}|^p\,dx\le C\,\int_{\Omega_1} w_{\Omega}\,dx+\frac{C}{\mathrm{dist}(\Omega',\partial \Omega_1)^p}\, \int_{\Omega_1} |w_{\Omega}|^p\,dx,
\end{equation} 
for some $C=C(N,p)>0$,
where we also used the bound on $|\nabla \eta|$. In order to conclude, we need to bound the right-hand side of \eqref{LpLp}. Since we are assuming $w_\Omega\in L^1_{loc}(\Omega)$, then we can apply Lemma \ref{lm:guidone} in each connected component of $\Omega$ and obtain $w_\Omega\in L^\infty_{loc}(\Omega)$. Thus the right-hand side of \eqref{LpLp} is finite and we get \eqref{gradienti}.
\vskip.2cm\noindent
In order to show that $w_\Omega$ is a local weak solution of \eqref{weakeqw}, we need to pass to the limit in the equation
\begin{equation}
\label{equar}
\int \langle |\nabla w_{\Omega,R}|^{p-2}\,\nabla w_{\Omega,R},\nabla\phi\rangle\,dx=\int \phi\,dx.
\end{equation}
We first observe that for $p=2$ the local weak convergence of the gradients already gives the result, by linearity of \eqref{equar}. In the case $p\not=2$ we need to improve this weak convergence into a stronger one. For this, we can use the higher differentiability of solutions of the $p-$Laplacian. Namely, it is sufficient to observe that for every (smooth) open sets $\Omega'\Subset \Omega_1\Subset \Omega$, we have
\begin{equation}
\label{p<2}
\|\nabla w_{\Omega,R}\|_{W^{1,p}(\Omega')}\le \frac{C}{\mathrm{dist}(\Omega',\partial \Omega_1)}\,\|\nabla w_{\Omega,R}\|_{L^p(\Omega_1)},\qquad \mbox{ for } 1<p< 2,
\end{equation}
and
\begin{equation}
\label{p>2}
\left\||\nabla w_{\Omega,R}|^\frac{p-2}{2}\,\nabla w_{\Omega,R}\right\|_{W^{1,2}(\Omega')}^2\le \frac{C}{\mathrm{dist}(\Omega',\partial \Omega_1)^2}\,\|\nabla w_{\Omega,R}\|_{L^p(\Omega_1)}^p,\qquad \mbox{ for } p>2.
\end{equation}
again for $R>\rho_\Omega'$, so that $\Omega'\Subset \Omega\cap B_R$. These estimates are nowadays well-known: the first one comes from \cite[Proposition 2.4]{acfu}, while the second one can be found for example in \cite[Theorem 4.2]{bracarsan}. Observe that the right-hand sides of \eqref{p<2} and \eqref{p>2} are uniformly bounded, thanks to the first part of the proof. 
\par
From \eqref{p<2}, by Sobolev embedding $W^{1,p}(\Omega')\hookrightarrow L^p(\Omega')$ we have strong convergence (up to a subsequence) in $L^{p}(\Omega')$ of $\nabla w_{\Omega,R}$ to $\nabla w_{\Omega}$. If one then uses the elementary inequality\footnote{This follows from the fact that $z\mapsto |z|^{p-2}\,z$ is $(p-1)-$H\"older continuous, for $1<p<2$.}
\[
\int_{\Omega'} \left||\nabla w_{\Omega,R}|^{p-2}\,\nabla w_{\Omega,R}-|\nabla w_{\Omega}|^{p-2}\,\nabla w_{\Omega}\right|^{p'}\,dx\le C\,\int_{\Omega'} \left|\nabla w_{\Omega,R}-\nabla w_{\Omega}\right|^{p}\,dx,
\]
we obtain strong convergence in $L^{p'}(\Omega')$ of $|\nabla w_{\Omega,R}|^{p-2}\,\nabla w_{\Omega,R}$ to $|\nabla w_{\Omega}|^{p-2}\,\nabla w_{\Omega}$. Thus it is possible to pass to the limit in \eqref{equar} for $1<p<2$.
\par
For $p>2$, we observe that \eqref{p>2}, Sobolev embedding $W^{1,2}(\Omega')\hookrightarrow L^2(\Omega')$ and the elementary inequality\footnote{Observe that $z\mapsto |z|^\frac{2-p}{p}\,z$ is $2/p-$H\"older continuous, i.e.
\[
\left||z|^\frac{2-p}{p}\,z-|\xi|^\frac{2-p}{p}\,\xi\right|\le C\, |z-\xi|^\frac{2}{p}.
\]
The desired inequality is obtained by choosing
\[
z=|\nabla w_{\Omega,R}|^\frac{p-2}{2}\,w_{\Omega,R}\qquad \mbox{ and }\qquad \xi=|\nabla w_{\Omega,R'}|^\frac{p-2}{2}\,w_{\Omega,R'}.
\]
}
\[
\int_{\Omega'} \left||\nabla w_{\Omega,R}|^\frac{p-2}{2}\,\nabla w_{\Omega,R}-|\nabla w_{\Omega,R'}|^\frac{p-2}{2}\,\nabla w_{\Omega,R'}\right|^{2}\,dx\ge C\,\int_{\Omega'} \left|\nabla w_{\Omega,R}-\nabla w_{\Omega,R'}\right|^{p}\,dx,
\]
imply again that we can extract a sequence such that the gradients strongly convergence in $L^p(\Omega')$. The limit is of course $w_{\Omega,R}$, since this has to coincide with the weak limit. In order to conclude, we can observe that for every $\phi\in C^\infty(\Omega')$ we have
\[
\begin{split}
\Big|\int \langle |\nabla w_{\Omega,R}|^{p-2}\,w_{\Omega,R}&-|\nabla w_{\Omega}|^{p-2}\,w_{\Omega},\nabla\phi\rangle\,dx\Big|\\
&\le \|\nabla \phi\|_{L^\infty}\, \int_{\Omega'} \left||\nabla w_{\Omega,R}|^{p-2}\,\nabla w_{\Omega,R}-|\nabla w_{\Omega}|^{p-2}\,\nabla w_{\Omega}\right|\,dx\\
&\le C\,\|\nabla \phi\|_{L^\infty}\, \int_{\Omega'} (|\nabla w_{\Omega,R}|^{p-2}+|\nabla w_{\Omega}|^{p-2})\,|\nabla w_{\Omega,R}-\nabla w_\Omega|\,dx. 
\end{split}
\]
From the strong convergence of the gradients in $L^p_{loc}$, we get the desired result.
\end{proof}
\begin{remark}
Though we will not need this, we notice that once we obtained that $w_\Omega\in W^{1,p}_{loc}(\Omega)$ is a local weak solution of the equation, then we have $w_\Omega \in C^{1,\alpha}_{loc}(\Omega)$ for some $0<\alpha<1$.
\end{remark}

\section{The torsional Hardy inequality}
\label{sec:4}

In this section we are going to prove a Hardy-type inequality, which contains weights depending on $w_\Omega$. The proof of the sharpness of such an inequality is postponed to Section \ref{sec:6}.

\subsection{Compact case}
We start with the following slightly weaker result.
\begin{proposition}
\label{thm:disuguaglianzadebole}
Let $1<p<+\infty$ and let $\Omega\subset\mathbb{R}^N$ be an open set such that the embedding $W^{1,p}_0(\Omega)\hookrightarrow L^1(\Omega)$ is compact. Then for every $u\in W^{1,p}_0(\Omega)$ we have 
\begin{equation}
\label{suboptimalHB}
\left(\frac{p-1}{p}\right)^{p}\,\int_\Omega \left[\left|\frac{\nabla w_\Omega}{w_\Omega}\right|^p+\frac{p}{(p-1)}\,\frac{1}{w_\Omega^{p-1}} \right]\,|u|^p\,dx\le \int_\Omega |\nabla u|^p\,dx.
\end{equation}
\end{proposition}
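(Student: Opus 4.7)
The plan is to apply the standard ``ground-state'' argument for $p$-Laplacian Hardy inequalities: produce a positive function $v$ satisfying an equation of the form $-\Delta_p v = V\,v^{p-1}$ whose right-hand side delivers the target weight, then test this identity with $\phi = u^p/v^{p-1}$ and close with Young's inequality. A direct computation using $-\Delta_p w_\Omega = 1$ gives, for any $\alpha\in(0,1)$,
\[
\frac{-\Delta_p(w_\Omega^\alpha)}{w_\Omega^{\alpha(p-1)}} = \alpha^{p-1}\left[(1-\alpha)(p-1)\,\frac{|\nabla w_\Omega|^p}{w_\Omega^p}+\frac{1}{w_\Omega^{p-1}}\right],
\]
and matching coefficients with the weight appearing in \eqref{suboptimalHB} singles out $\alpha = (p-1)/p$ as the correct ansatz.

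Since $w_\Omega^{(p-1)/p}$ does not lie in $W^{1,p}(\Omega)$ by Lemma \ref{lm:VF}\,(i), I would regularize and work instead with $v_\varepsilon := (w_\Omega+\varepsilon)^{(p-1)/p}$ for $\varepsilon>0$. This function is bounded, bounded away from $0$, and belongs to $W^{1,p}_{\mathrm{loc}}(\Omega)$. Inserting $\psi = (w_\Omega+\varepsilon)^{-(p-1)/p}\,\phi$ (with $\phi \in C^\infty_0(\Omega)$) as a test function in \eqref{weakeqw} and using the chain rule, one obtains the distributional identity $-\Delta_p v_\varepsilon = V_\varepsilon\,v_\varepsilon^{p-1}$, where
\[
V_\varepsilon := \left(\frac{p-1}{p}\right)^p\left[\frac{|\nabla w_\Omega|^p}{(w_\Omega+\varepsilon)^p}+\frac{p}{p-1}\,\frac{1}{(w_\Omega+\varepsilon)^{p-1}}\right].
\]

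For $u\in C^\infty_0(\Omega)$ with $u\ge 0$, the function $\phi = u^p/v_\varepsilon^{p-1}$ is a bona fide element of $W^{1,p}_0(\Omega)$ (it is compactly supported and $v_\varepsilon \ge \varepsilon^{(p-1)/p}>0$). Testing the identity for $v_\varepsilon$ against $\phi$, expanding $\nabla\phi$, and invoking Cauchy--Schwarz together with the scaled Young inequality
\[
p\,|\nabla u|\left(\frac{u\,|\nabla v_\varepsilon|}{v_\varepsilon}\right)^{p-1}\le |\nabla u|^p+(p-1)\left(\frac{u\,|\nabla v_\varepsilon|}{v_\varepsilon}\right)^p
\]
causes the ``bad'' gradient term to cancel exactly, leaving $\int_\Omega V_\varepsilon\,u^p\,dx\le\int_\Omega|\nabla u|^p\,dx$. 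Since $V_\varepsilon$ is pointwise monotone nondecreasing as $\varepsilon\downarrow 0$ and converges to the weight in \eqref{suboptimalHB}, monotone convergence yields \eqref{suboptimalHB} for nonnegative $u\in C^\infty_0(\Omega)$; the general case then follows by replacing $u$ with $|u|$ (and using $|\nabla|u||=|\nabla u|$ a.e.) together with the density of $C^\infty_0(\Omega)$ in $W^{1,p}_0(\Omega)$.

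I expect the main technical obstacle to be the careful derivation of the regularized identity for $v_\varepsilon$: one must justify that $(w_\Omega+\varepsilon)^{-(p-1)/p}\phi$ genuinely belongs to $W^{1,p}_0(\Omega)$ (here the $L^\infty$ bound on $w_\Omega$ from Proposition \ref{prop:limitazza} guarantees that the composition is bounded and globally Lipschitz on the range of $w_\Omega$, and the compact support of $\phi$ takes care of the boundary) and then apply the chain rule for $\nabla v_\varepsilon$ rigorously. Everything else is a mechanical computation plus a single application of Young's inequality, so this formulation step is really the only place that needs care.
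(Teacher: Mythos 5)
Your proposal is correct and, once the intermediate weak equation for $v_\varepsilon$ is unrolled, it amounts to inserting the test function $u^p\,(w_\Omega+\varepsilon)^{1-p}$ into \eqref{weakeqw} and applying the very same Young inequality as the paper: indeed $\psi=(w_\Omega+\varepsilon)^{-(p-1)/p}\,\phi$ with $\phi=u^p/v_\varepsilon^{p-1}$ gives exactly $\psi=u^p\,(w_\Omega+\varepsilon)^{1-p}$, and your scaled Young inequality coincides with the paper's after multiplying through by $\bigl(\tfrac{p-1}{p}\bigr)^{1-p}$. The ground-state reformulation via $v_\varepsilon=(w_\Omega+\varepsilon)^{(p-1)/p}$ is a conceptually transparent wrapper that explains why the exponent $(p-1)/p$ is the natural one, but it is not a mathematically different route.
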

\begin{proof}
We first observe that it is sufficient to prove inequality \eqref{suboptimalHB} for positive functions. Let $u\in C^\infty_0(\Omega)$ be positive. 
We recall that
\begin{equation}
\label{conf}
\int_{\Omega}
\left\langle |\nabla w_\Omega|^{p-2}\,\nabla w_\Omega,\nabla \phi\right\rangle\,dx=\int_{\Omega} \phi\,dx,
\end{equation}
for any $\phi\in W^{1,p}_0(\Omega)$. By taking in \eqref{conf} the test
function 
\[
\phi=u^p\,(w_\Omega+\varepsilon)^{1-p},
\]
we get
\begin{equation}
\label{1-7}
\int_\Omega \left[\frac{(p-1)\,|\nabla w_\Omega|^p+(w_\Omega+\varepsilon)}{(w_\Omega+\varepsilon)^p} \right]\,u^p\,dx=p\,\int_\Omega u^{p-1}\,\left \langle 
\frac{|\nabla w_\Omega|^{p-2}\,\nabla w_\Omega}{(w_\Omega+\varepsilon)^{p-1}},\nabla u \right\rangle\,dx. 
\end{equation}
By Young inequality, for any $\xi,z\in\mathbb{R}^N$ 
it holds
\begin{equation}
\label{giovane!}
\langle \xi,z\rangle \le \frac{1}{p}\, |z|^p+\frac{p-1}{p}\,|\xi|^\frac{p}{p-1}.
\end{equation}
By applying such an inequality to \eqref{1-7}, with
\[
z=\left(\frac{p}{p-1}\right)^\frac{p-1}{p}\,\nabla u,\qquad \mbox{ and }\qquad \xi=\left(\frac{p-1}{p}\right)^{\frac{p-1}{p}}\,u^{p-1}\,\frac{|\nabla w|^{p-2}\, \nabla w}{(w+\varepsilon)^{p-1}},
\]
we get that
\[
\begin{split}
\int_\Omega \left[\frac{(p-1)\,|\nabla w|^p+(w+\varepsilon)}{(w+\varepsilon)^p} \right]\,u^p\,dx&\le \left(\frac{p}{p-1}\right)^{p-1}\,\int_\Omega |\nabla
u|^p\,dx\\
&+(p-1)\,\frac{p-1}{p}\,\int_\Omega \frac{u^p\,|\nabla w|^p}{(w+\varepsilon)^{p}}\,dx.
\end{split}
\]
The previous inequality gives
\[
\begin{split}
\left(\frac{p-1}{p}\right)^p\,\int_\Omega \left[\frac{|\nabla w|^p}{(w+\varepsilon)^p} +\frac{p}{(p-1)\,(w+\varepsilon)^{p-1}}\right]\,u^p\,dx\le \int_\Omega |\nabla
u|^p\,dx.
\end{split}
\]
Finally we let $\varepsilon$ go to $0$, then Fatou's Lemma gives the inequality \eqref{suboptimalHB} for $u\in C^\infty_0(\Omega)$ positive. The case of a general $u\in W^{1,p}_0(\Omega)$ follows by density.  
\end{proof}
As a consequence of the torsional Hardy inequality, we record the following integrability properties of functions in $W^{1,p}_0(\Omega)$. This will be useful in a while.
\begin{corollary}
Under the assumptions of Proposition \ref{thm:disuguaglianzadebole},
for every $u\in W^{1,p}_0(\Omega)$ we have
\begin{equation}
\label{integrability}
\int_\Omega \left|\frac{\nabla w_\Omega}{w_\Omega}\right|^p\,|u|^p\,dx<+\infty\qquad \mbox{ and }\qquad \int_\Omega \frac{|u|^p}{w_\Omega^{p-1}}\,dx<+\infty.
\end{equation}
\end{corollary}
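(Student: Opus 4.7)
The plan is to read off both integrability statements directly from the torsional Hardy inequality \eqref{suboptimalHB} proved in Proposition \ref{thm:disuguaglianzadebole}. The key observation is that both terms appearing inside the brackets on the left-hand side of \eqref{suboptimalHB}, namely
\[
\left|\frac{\nabla w_\Omega}{w_\Omega}\right|^p \qquad \text{and} \qquad \frac{p}{p-1}\,\frac{1}{w_\Omega^{p-1}},
\]
are non-negative almost everywhere on $\Omega$. Here I would briefly note that $w_\Omega>0$ in $\Omega$ (by the strong minimum principle applied to the $p$-Laplace equation solved by $w_\Omega$, which can be invoked since $w_\Omega \in C^{1,\alpha}_{\mathrm{loc}}$ and $-\Delta_p w_\Omega = 1 \ge 0$), so that the integrands are well-defined measurable functions with values in $[0,+\infty]$.

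Having fixed an arbitrary $u \in W^{1,p}_0(\Omega)$, the right-hand side $\int_\Omega |\nabla u|^p\,dx$ of \eqref{suboptimalHB} is finite by definition of the Sobolev space. Since the sum of the two non-negative integrals on the left is bounded by a finite quantity, each of the two summands is itself finite; this yields exactly the two conclusions
\[
\int_\Omega \left|\frac{\nabla w_\Omega}{w_\Omega}\right|^p\,|u|^p\,dx<+\infty \qquad \text{and} \qquad \int_\Omega \frac{|u|^p}{w_\Omega^{p-1}}\,dx<+\infty.
\]

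There is no genuine obstacle here: the statement is a direct corollary of \eqref{suboptimalHB}, and the only subtlety worth mentioning is the positivity of $w_\Omega$ inside $\Omega$, which is needed to give a pointwise meaning to the weights $1/w_\Omega^{p-1}$ and $|\nabla w_\Omega|^p/w_\Omega^p$ almost everywhere.
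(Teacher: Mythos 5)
Your argument is correct and is exactly the intended one: the paper states the corollary without proof precisely because it follows immediately from \eqref{suboptimalHB} by observing that the two non-negative summands on the left, each multiplied by the positive constant $((p-1)/p)^p$, are bounded above by the finite quantity $\int_\Omega |\nabla u|^p\,dx$. Your side remark on the strict positivity of $w_\Omega$ in $\Omega$ (so that the weights make pointwise sense) is a sound and appropriate clarification.
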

The following functional inequality is the main result of this section.
\begin{theorem}[Torsional Hardy inequality]
\label{thm:delta}
Under the assumptions of Proposition \ref{thm:disuguaglianzadebole}, for every $\delta>0$ and every $u\in W^{1,p}_0(\Omega)$ we have
\begin{equation}
\label{HardyButtazzo}
\frac{p-1}{\delta}\,\int_\Omega \left[\left(1-\delta^{-\frac{1}{p-1}}\right)\,\left|\frac{\nabla w_\Omega}{w_\Omega}\right|^p+\frac{1}{(p-1)\,w_\Omega^{p-1}} \right]\,|u|^p\,dx\le \int_\Omega |\nabla
u|^p\,dx.
\end{equation}
\end{theorem}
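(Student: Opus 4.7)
The strategy is to repeat the proof of Proposition \ref{thm:disuguaglianzadebole} almost verbatim, replacing the balanced Young inequality \eqref{giovane!} by its weighted version with a free parameter $\sigma>0$:
\[
p\,\langle \xi, z\rangle \le \sigma\,|z|^p + (p-1)\,\sigma^{-1/(p-1)}\,|\xi|^{p/(p-1)},\qquad \xi, z\in\mathbb{R}^N,
\]
which follows from the plain Young inequality via the substitutions $a=\sigma^{1/p}|z|$, $b=\sigma^{-1/p}|\xi|$. The parameter $\sigma$ is the one that will ultimately be relabelled $\delta$.

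I would start with $u\in C^\infty_0(\Omega)$ positive and insert the same test function $\phi=u^p\,(w_\Omega+\varepsilon)^{1-p}$ used in Proposition \ref{thm:disuguaglianzadebole} into \eqref{weakeqw}, reaching identity \eqref{1-7}. Applying the weighted Young inequality to the right-hand side with
\[
z=\nabla u,\qquad \xi=u^{p-1}\,\frac{|\nabla w_\Omega|^{p-2}\,\nabla w_\Omega}{(w_\Omega+\varepsilon)^{p-1}},
\]
and noticing that $|\xi|^{p/(p-1)}=u^p\,|\nabla w_\Omega|^p/(w_\Omega+\varepsilon)^p$, a short rearrangement (transferring the $|\nabla w_\Omega|^p$-term from right to left and dividing by $\sigma$) yields
\[
\int_\Omega \left[\frac{p-1}{\sigma}\left(1-\sigma^{-\frac{1}{p-1}}\right)\frac{|\nabla w_\Omega|^p}{(w_\Omega+\varepsilon)^p}+\frac{1}{\sigma\,(w_\Omega+\varepsilon)^{p-1}}\right]u^p\,dx \le \int_\Omega |\nabla u|^p\,dx,
\]
which, after setting $\sigma=\delta$ and using the identity $1/\delta = ((p-1)/\delta)\cdot 1/(p-1)$, is precisely \eqref{HardyButtazzo} with $w_\Omega+\varepsilon$ in place of $w_\Omega$.

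To conclude, I would send $\varepsilon\downarrow 0$. When $\delta\ge 1$ the bracket is pointwise nonnegative and Fatou's lemma applies termwise. When $0<\delta<1$ the factor $1-\delta^{-1/(p-1)}$ becomes negative, so I would first transpose the $|\nabla w_\Omega|^p/(w_\Omega+\varepsilon)^p$ contribution to the right-hand side, where it is handled by dominated convergence---on the compact support of $u$ the integrand is uniformly bounded, using the local $C^{1,\alpha}$-regularity of $w_\Omega$ together with the fact that $w_\Omega$ stays bounded away from $0$ on compacts---then transpose it back; Fatou then disposes of the remaining nonnegative summand. Density of $C^\infty_0(\Omega)$ in $W^{1,p}_0(\Omega)$, combined with the integrability guarantees \eqref{integrability}, extends the conclusion to every $u\in W^{1,p}_0(\Omega)$. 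The only delicate point in the whole argument is this sign bookkeeping when $\delta<1$; otherwise the proof is a direct parameter-tracking refinement of Proposition \ref{thm:disuguaglianzadebole}.
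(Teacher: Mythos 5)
Your proposal is correct and follows essentially the same route as the paper: insert $\phi=u^p\,(w_\Omega+\varepsilon)^{1-p}$ into \eqref{weakeqw}, apply a $\delta$-weighted Young inequality to the resulting identity \eqref{1-7}, and pass to the limit $\varepsilon\downarrow 0$. The only difference is cosmetic: the paper avoids your sign case split for $\delta<1$ by keeping the $|\nabla w_\Omega|^p/(w_\Omega+\varepsilon)^p$ contribution on the right-hand side throughout the $\varepsilon$-limit (so both sides increase monotonically and the Monotone Convergence Theorem applies directly, with finiteness ensured by \eqref{integrability}), rearranging only afterwards; your dominated-convergence argument on the compact support of $u$ achieves the same end.
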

\begin{proof}
The proof is the same as that of Proposition \ref{thm:disuguaglianzadebole}. The main difference is that now we use Young inequality \eqref{giovane!} with the choices
\begin{equation}
\label{ab}
z=\delta^\frac{1}{p}\,\nabla u\qquad\mbox{ and }\qquad \xi=\delta^{-\frac{1}{p}}\,u^{p-1}\,\frac{|\nabla w|^{p-2}\, \nabla w}{(w+\varepsilon)^{p-1}},
\end{equation}
where $\delta>0$ is a free parameter. Thus this time we get
\[
\begin{split}
\int_\Omega \left[\frac{(p-1)\,|\nabla w_\Omega|^p+(w_\Omega+\varepsilon)}{(w_\Omega+\varepsilon)^p} \right]\,|u|^p\,dx&\le \delta\,\int_\Omega |\nabla
u|^p\,dx+(p-1)\,\delta^{-\frac{1}{p-1}}\,\int_\Omega \frac{|u|^p\,|\nabla w_\Omega|^p}{(w_\Omega+\varepsilon)^{p}}\,dx.
\end{split}
\]
We can now pass to the limit on both sides. By using \eqref{integrability} and the Monotone Convergence Theorem we get
\[
\int_\Omega \left[\frac{(p-1)\,|\nabla w_\Omega|^p+w_\Omega}{w_\Omega^p} \right]\,|u|^p\,dx\le \delta\,\int_\Omega |\nabla
u|^p\,dx+(p-1)\,\delta^{-\frac{1}{p-1}}\,\int_\Omega |u|^p\,\left|\frac{\nabla w_\Omega}{w_\Omega}\right|^p\,dx.
\]
This gives the conclusion for $u$ smooth and positive. A density argument gives again the general result.
\end{proof}
\begin{remark}
Observe that one could optimize \eqref{HardyButtazzo} with respect to $\delta>0$. This leads to the following stronger form of the torsional Hardy inequality
\begin{equation}
\label{optimalHB}
\left(\frac{p-1}{p}\right)^{p}\,\frac{\displaystyle\left(\int_\Omega \left[\left|\frac{\nabla w_\Omega}{w_\Omega}\right|^p+\frac{1}{(p-1)\,w_\Omega^{p-1}} \right]\,|u|^p\,dx\right)^p}{\left(\displaystyle\int_\Omega \left|\frac{\nabla w_\Omega}{w_\Omega}\right|^p\,|u|^p\,dx\right)^{p-1}}\le \int_\Omega |\nabla
u|^p\,dx.
\end{equation}
We leave the details to the interested reader.
\end{remark}
\subsection{General case}
Finally, we consider the case of a general open set $\Omega\subset\mathbb{R}^N$. We will need the following version of the torsional Hardy inequality.
\begin{proposition}
\label{prop:HBgeneral}
Let $1<p<+\infty$ and let $\Omega\subset\mathbb{R}^N$ be an open set. Then for every $u\in W^{1,p}_0(\Omega)$ we have 
\begin{equation}
\label{HBgeneral}
\int_{\{w_\Omega<+\infty\}} \frac{|u|^p}{w_\Omega^{p-1}}\,dx\le \int_\Omega |\nabla u|^p\,dx.
\end{equation}
\end{proposition}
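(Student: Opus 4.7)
My plan is to derive \eqref{HBgeneral} from the compact-case inequality \eqref{THI} by an exhaustion argument, pushing the inequality from the bounded sets $\Omega\cap B_R$ (to which Theorem \ref{thm:delta} applies) up to $\Omega$ via the monotone convergence $w_{\Omega,R}\nearrow w_\Omega$.

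First I would reduce to the case $u\in C^\infty_0(\Omega)$ by density: if the inequality is true for compactly supported smooth functions, then taking $u_n\in C^\infty_0(\Omega)$ with $u_n\to u$ in $W^{1,p}_0(\Omega)$ (and a.e.\ along a subsequence), Fatou's lemma applied to the nonnegative integrand $|u_n|^p/w_\Omega^{p-1}$ on $\{w_\Omega<+\infty\}$ together with convergence of the gradient norms yields \eqref{HBgeneral} for $u$.

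Next, fix $u\in C^\infty_0(\Omega)$ and choose $R_0>R_\Omega$ so that $\mathrm{spt}(u)\Subset B_{R_0}$. For every $R\ge R_0$, the open set $\Omega\cap B_R$ is bounded, hence the embedding $W^{1,p}_0(\Omega\cap B_R)\hookrightarrow L^1(\Omega\cap B_R)$ is compact and its $p$-torsion function is exactly $w_{\Omega,R}$ from Definition \ref{defi:general}. Since $u\in W^{1,p}_0(\Omega\cap B_R)$ (after extension by zero), Theorem \ref{thm:delta} applied with $\delta=1$ — which kills the gradient term $|\nabla w/w|^p$ and leaves only the weight $1/w^{p-1}$ with coefficient $1$ — gives
\[
\int_{\Omega\cap B_{R_0}}\frac{|u|^p}{w_{\Omega,R}^{p-1}}\,dx
=\int_{\Omega\cap B_R}\frac{|u|^p}{w_{\Omega,R}^{p-1}}\,dx
\le \int_{\Omega\cap B_R}|\nabla u|^p\,dx=\int_\Omega|\nabla u|^p\,dx,
\]
where the first equality uses that $u$ vanishes outside $B_{R_0}$.

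Finally, I let $R\to\infty$. By Definition \ref{defi:general} the sequence $w_{\Omega,R}$ is pointwise nondecreasing and converges to $w_\Omega$, so $|u|^p/w_{\Omega,R}^{p-1}$ decreases pointwise to $|u|^p/w_\Omega^{p-1}$ on $\{w_\Omega<+\infty\}$ and to $0$ on $\{w_\Omega=+\infty\}$. Because the integral for $R=R_0$ is finite (dominated by $\int_\Omega|\nabla u|^p$), the dominated convergence theorem applies and produces
\[
\int_{\{w_\Omega<+\infty\}}\frac{|u|^p}{w_\Omega^{p-1}}\,dx
\le \int_\Omega|\nabla u|^p\,dx,
\]
which is the desired estimate for smooth compactly supported $u$. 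Combined with the density step, this proves \eqref{HBgeneral}. The only subtle point is keeping track of where $w_{\Omega,R}$ is finite vs.\ $+\infty$ in the limit, but the monotonicity of the exhaustion and the convention $1/(+\infty)=0$ handle this cleanly.
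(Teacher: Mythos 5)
Your proof is correct and follows essentially the same strategy as the paper: reduce to $u\in C^\infty_0(\Omega)$ by density, apply the compact-case torsional Hardy inequality (Theorem \ref{thm:delta} with $\delta=1$) on the bounded sets $\Omega\cap B_R$, and pass to the limit $R\to\infty$ using the monotonicity $w_{\Omega,R}\nearrow w_\Omega$. The only cosmetic difference is that you invoke dominated convergence for the final limit, while the paper uses Fatou's lemma; both work equally well here, since the integrands $|u|^p/w_{\Omega,R}^{p-1}$ decrease pointwise and are bounded above by the integrable first term.
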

\begin{proof}
We just need to prove \eqref{HBgeneral} for functions in $C^\infty_0(\Omega)$. Let $u$ be a such a function, then we take as always $R_1>R_\Omega$ large enough so that the support of $u$ is contained in $\Omega\cap B_{R}$, for every $R\ge R_1$. We can then use Theorem \ref{thm:delta} with $\delta=1$ and obtain
\[
\int_{\Omega} \frac{|u|^p}{w_{\Omega,R}^{p-1}}\,dx\le \int_\Omega |\nabla u|^p\,dx,\qquad R\ge R_1.
\]
If we now take the limit as $R$ go to $\infty$ and use Fatou's Lemma once again, we get the desired conclusion by appealing to the definition of $w_\Omega$.
\end{proof}

\section{Proofs of the main results}
\label{sec:5}

\subsection{Proof of Theorem \ref{thm:main}}
For ease of notation, we set
\[
\gamma:=\frac{p-1}{p-q}\,q.
\]
We start by proving the first equivalence, i.e.
\[
\lambda_{p,q}(\Omega)>0 \quad \Longleftrightarrow \quad w_\Omega\in L^\gamma(\Omega).
\]
Let us assume that $\lambda_{p,q}(\Omega)>0$. We recall that $w_{\Omega,R}$ satisfies
\[
\int_{\Omega\cap B_R} \langle|\nabla w_{\Omega,R}|^{p-2}\, \nabla w_{\Omega,R},\nabla \phi\rangle\,dx=\int_{\Omega\cap B_R} \phi\,dx,
\]
for every $\phi\in W^{1,p}_0(\Omega\cap B_R)$. By Lemma \ref{lm:VF}, the function $\phi=w_{\Omega,R}^\beta$ is a legitimate test function for every $\beta\ge 1$, since $\Omega\cap B_R$ is an open bounded set. By using this, we get with simple manipulations 
\begin{equation}
\label{brividi}
\beta\,\left(\frac{p}{\beta+p-1}\right)^p\,\int_{\Omega\cap B_R} \left|\nabla w_{\Omega,R}^\frac{\beta+p-1}{p}\right|^p\,dx=\int_{\Omega\cap B_R} w_{\Omega,R}^\beta\,dx.
\end{equation}
We now observe that $(\beta+p-1)/p\ge 1$, thus  $w_{\Omega,R}^{(\beta+p-1)/p}\in W^{1,p}_0(\Omega\cap B_R)$, still thanks to Lemma \ref{lm:VF}. Moreover, the inclusion $\Omega\cap B_R\subset\Omega$ implies
\[
0<\lambda_{p,q}(\Omega)\le \lambda_{p,q}(\Omega\cap B_R).
\]
Then we can apply the relevant Poincar\'e inequality in the left-hand side of \eqref{brividi} and get
\[
\beta\,\left(\frac{p}{\beta+p-1}\right)^p\,\lambda_{p,q}(\Omega) \,\left(\int_{\Omega\cap B_R} w_{\Omega,R}^{\frac{\beta+p-1}{p}\,q}\,dx\right)^\frac{p}{q}\le \int_{\Omega\cap B_R} w_{\Omega,R}^{\beta}\,dx.
\]
This is valid for a generic $\beta\ge 1$. In order to obtain the desired estimate, we now choose
\[
\beta=\gamma=\frac{p-1}{p-q}\,q\qquad \mbox{ so that }\qquad \frac{\beta+p-1}{p}\,q=\beta,
\]
which is feasible, since $\gamma\ge 1$.
By using that $p/q>1$, after a simplification we get
\[
\lambda_{p,q}(\Omega)\, \left(\int_{\Omega\cap B_R} w_{\Omega,R}^\gamma\,dx\right)^\frac{p-q}{q}\le \frac{1}{q}\, \left(\frac{p-1}{p-q}\right)^{p-1}.
\]
We now take the limit ar $R$ goes to $+\infty$, then Fatou's Lemma gives that $w_\Omega\in L^\gamma(\Omega)$, together with the upper bound in \eqref{lambda1}.
\vskip.2cm\noindent
Let us now assume $w_\Omega\in L^\gamma(\Omega)$. We observe that the latter entails $|\{w_\Omega<+\infty\}|=|\Omega|$. Then for every $u\in C^\infty_0(\Omega)$ with unit $L^q$ norm, by combining H\"older inequality and \eqref{HBgeneral}, we get
\[
\begin{split}
\int_\Omega |u|^q\,dx=1&\le \left(\int_\Omega \frac{|u|^p}{w_\Omega^{p-1}}\,dx\right)^\frac{q}{p}\,\left(\int_\Omega w_\Omega^\gamma\,dx\right)^\frac{p-q}{p}\le \left(\int_\Omega |\nabla u|^p\,dx\right)^\frac{q}{p}\,\left(\int_\Omega w_\Omega^\gamma\,dx\right)^\frac{p-q}{p}.
\end{split}
\] 
By taking the infimum over admissible $u$, we get $\lambda_{p,q}(\Omega)>0$. The result comes with the lower bound in \eqref{lambda1}. 
\vskip.2cm\noindent
In order to complete the proof, we are now going to prove the equivalence
\[
\lambda_{p,q}(\Omega)>0 \quad \Longleftrightarrow \quad W^{1,p}_0(\Omega)\hookrightarrow L^q(\Omega) \mbox{ is compact}.
\]
The fact that the compactness of the embedding $W^{1,p}_0(\Omega)\hookrightarrow L^q(\Omega)$ implies $\lambda_{p,q}(\Omega)>0$ is straightforward. We thus focus on the converse implication.
Let us assume that $\lambda_{p,q}(\Omega)>0$. We remark that we already know that this implies (and is indeed equivalent to) $w_\Omega\in L^\gamma(\Omega)$. Let $\{u_n\}\subset W^{1,p}_0(\Omega)$ be a bounded sequence, i.e.
\begin{equation}
\label{gradientibound}
\|\nabla u_n\|_{L^p(\Omega)}\le L,\qquad \mbox{ for every }n\in\mathbb{N}.
\end{equation}
By definition of $\lambda_{p,q}(\Omega)$, we have that $\{u_n\}_{n\in\mathbb{N}}$ is bounded in $L^q(\Omega)$ as well. By uniform convexity of $W^{1,p}_0(\Omega)$, we have that $\{u_n\}_{n\in\mathbb{N}}$ converges weakly (up to a subsequence) in $W^{1,p}_0(\Omega)$ to a function $u\in W^{1,p}_0(\Omega)$. Moreover, we can suppose that the convergence is strong in $L^q_{loc}(\Omega)$. By Fatou's Lemma, this in turn implies that $u\in L^q(\Omega)$ as well. 
Let us consider the new sequence 
\[
U_n:=u_n-u\in W^{1,p}_0(\mathbb{R}^N)\cap L^q(\mathbb{R}^N),
\]
that we consider extended by $0$ outside $\Omega$. By  the local convergence in $L^q$, for every $\varepsilon>0$ and every $R>0$ there exists $n_{\varepsilon,R}\in\mathbb{N}$ such that
\begin{equation}
\label{dentro}
\int_{B_{R+1}} |U_n|^q\,dx< \varepsilon,\qquad \mbox{ for every }n\ge n_{\varepsilon,R}.
\end{equation}
In order to control the integral on $\mathbb{R}^N\setminus B_{R+1}$ uniformly, we use again the torsional Hardy inequality. For every $R>0$, we take a positive function $\eta_R\in C^{\infty}(\mathbb{R}^N\setminus B_R)$ such that
\[
\eta_R\equiv 1 \mbox{ in } \mathbb{R}^N\setminus B_{R+1},\qquad \eta_R\equiv 0 \mbox{ in } B_{R},\qquad 0\le \eta_R\le 1,\qquad |\nabla \eta_R|\le C,
\]
for some universal constant $C>0$. Each function $U_n\,\eta_R$ belongs to $W^{1,p}_0(\Omega)$, then by combining H\"older inequality and \eqref{HBgeneral} as before, we have
\begin{equation}
\label{boundare}
\begin{split}
\int_{\mathbb{R}^N\setminus B_{R+1}} |U_n|^q\,dx&\le  \left(\int_{\Omega} \frac{|U_n\,\eta_R|^p}{w_\Omega^{p-1}}\,dx\right)^\frac{q}{p}\,\left(\int_{\Omega\setminus B_R} w_\Omega^\gamma\,dx\right)^\frac{p-q}{p}\\
&\le \left(\int_\Omega |\nabla (U_n\,\eta_R)|^p\,dx\right)^\frac{q}{p}\,\left(\int_{\Omega\setminus B_R} w_\Omega^\gamma\,dx\right)^\frac{p-q}{p}.
\end{split}
\end{equation}
In the first inequality we used the properties of $\eta_R$, which imply in particular that $U_n\,\eta_R\equiv 0$ inside $B_R$.
We now observe that the first term in the right-hand side of \eqref{boundare} is bounded uniformly. Indeed, by \eqref{gradientibound} and the triangle inequality 
\[
\left(\int_\Omega |\nabla (U_n\,\eta_R)|^p\,dx\right)^\frac{1}{p}\le L+C\,\|U_n\|_{L^p(\Omega)},
\]
and the last term is again estimated in terms of $L$ and $\lambda_{p,q}(\Omega)>0$, thanks to the Gagliardo-Nirenberg inequalities of Proposition \ref{prop:GNS}, applied with $r=p$. 
\par
On the other hand, since $w_\Omega\in L^\gamma(\Omega)$, by the absolute continuity of the integral for every $\varepsilon>0$, there exists $R_\varepsilon>0$ such that
\[
\left(\int_{\Omega\setminus B_{R_\varepsilon}} w_\Omega^\gamma\,dx\right)^\frac{p-q}{p}\le \varepsilon.
\]
By spending these informations in \eqref{boundare}, we finally get 
\begin{equation}
\label{fuori}
\int_{\mathbb{R}^N\setminus B_{R_\varepsilon+1}} |U_n|^q\,dx< \widetilde C\,\varepsilon,\qquad \mbox{ for every }n\in\mathbb{N},
\end{equation}
for some $\widetilde C>0$ independent of $n$ and $\varepsilon$. By collecting \eqref{dentro} and \eqref{fuori}, we proved that for every $\varepsilon>0$ there exists $R_\varepsilon>0$ and $n_{\varepsilon}\in\mathbb{N}$ such that
\[
\int_{\mathbb{R}^N} |U_n|^q\,dx=\int_{B_{R_\varepsilon+1}} |U_n|^q\,dx+\int_{\mathbb{R}^N\setminus B_{R_\varepsilon+1}} |U_n|^q\,dx< (1+\widetilde C)\,\varepsilon,\qquad \mbox{ for every }n\ge n_\varepsilon.
\]
This finally shows that $U_n=u_n-u$ strongly converges to $0$ in $L^q(\Omega)$.

\subsection{Proof of Theorem \ref{thm:collection}} 
The fact that $w_\Omega\in L^\infty(\Omega)$ implies $\lambda_{p,p}(\Omega)>0$ follows as before by using the torsional Hardy inequality \eqref{HBgeneral}. Indeed, for every $u\in C^\infty_0(\Omega)$ with unit $L^p$ norm we have 
\[
\begin{split}
\int_\Omega |u|^p\,dx=1&\le\|w_\Omega\|_{L^\infty}^{p-1}\, \int_\Omega \frac{|u|^p}{w_\Omega^{p-1}}\,dx\,\le \|w_\Omega\|^{p-1}_{L^\infty}\,\int_\Omega |\nabla u|^p\,dx.
\end{split}
\]
This also shows the first inequality in \eqref{comprise}.
The converse implication is exactly the van den Berg--Bucur estimate of \cite[Theorem 9]{vabu}. 
\vskip.2cm\noindent
As for the characterization \eqref{compatto} of the compact embedding $W^{1,p}_0(\Omega)\hookrightarrow L^p(\Omega)$, we first observe that when this holds, then $\lambda_{p,p}(\Omega)>0$ and this in turn implies $w_\Omega\in L^\infty(\Omega)$.
The proof of the implication ``$\Longrightarrow$'' can now be proved exactly as in \cite[Theorem 6.1]{bubu} by Bucur and Buttazzo.
\par 
The implication ``$\Longleftarrow$'' can be proved by appealing again to the torsional Hardy inequality.
Indeed, the hypothesis on $w_\Omega$ implies that bounded sequences $\{u_n\}_{n\in\mathbb{N}}\subset W^{1,p}_0(\Omega)$ are bounded in $L^p(\Omega)$ as well, since $w_\Omega \in L^\infty(\Omega)$ and thus $\lambda_{p,p}(\Omega)>0$. Moreover, the bound on the $L^p$ norms of the gradients guarantees that translations converge to $0$ in $L^p(\Omega)$ uniformly in $n$, i.e.
\[
\lim_{|h|\to 0}\left(\sup_{n\in\mathbb{N}} \int_{\mathbb{R}^N} |u_n(x+h)-u_n(x)|^p\,dx\right)=0.
\] 
In order to exclude loss of mass at infinity for the sequence $\{|u_n|^p\}_{n\in\mathbb{N}}$, we observe that with an argument similar to that of \eqref{boundare}, by \eqref{HBgeneral} we have
\[
\int_{\mathbb{R}^N\setminus B_{R+1}} |u_n|^p\,dx\le \left(\int_\Omega |\nabla (u_n\,\eta_R)|^p\,dx\right)\,\|w_\Omega\|_{L^\infty(\Omega\setminus B_R)}^{p-1},
\]
where $\eta_R$ is as in the proof of Theorem \ref{thm:main}. Thus the loss of mass at infinity is excluded, by using the hypothesis on the decay at infinity of $w_\Omega$. This yields strong convergence in $L^p(\Omega)$ (up to a subsequence), thanks to the Riesz-Fr\'echet-Kolmogorov Theorem.

\begin{remark}
\label{rem:guidedonde}
Differently from the case $1\le q<p$, the fact that $\lambda_{p,p}(\Omega)>0$ {\it does not} entail in general that the embedding $W^{1,p}_0(\Omega)\hookrightarrow L^p(\Omega)$ is compact. A simple counterexample is given by any rectilinear wave-guide $\Omega=\omega\times \mathbb{R}\subset\mathbb{R}^N$, where $\omega\subset\mathbb{R}^{N-1}$ is a bounded open set. Indeed, it is well-known that $\lambda_{p,p}(\Omega)>0$ in this case, while one can find bounded sequences $\{u_n\}_{n\in\mathbb{N}}\subset W^{1,p}_0(\Omega)$ which do not admit strongly convergent subsequences. 
\end{remark}
\begin{example}
\label{exa:locale?}
For simplicity we focus on the case $p=2$, but the very same example works for every $1<p<+\infty$, with the necessary modifications. Let $\{r_i\}_{i\in\mathbb{N}}\subset \mathbb{R}$ be a sequence of strictly positive numbers, such that
\[
\sum_{i=0}^\infty r_i^N=+\infty.
\] 
We then define the sequence of points $\{x_i\}_{i\in\mathbb{N}}\subset\mathbb{R}^N$ by
\[
\left\{\begin{array}{rcl}
x_0&=&(0,\dots,0),\\
x_{i+1}&=&(r_i+r_{i+1},0,\dots,0)+x_i,
\end{array}
\right.
\]
and the set 
\begin{equation}
\label{palloni}
\Omega=\bigcup_{i=0}^\infty B_{r_i}(x_i),
\end{equation}
which by construction is a disjoint union of open balls, with $|\Omega|=+\infty$. On each ball $B_{r_i}(x_i)$ the torsion function is given by
\[
w_{B_{r_i}(x_i)}=\frac{(r_i^2-|x-x_i|^2)_+}{2\,N},
\]
thus we have the explicit expression for the torsion function of $\Omega$
\[
w_\Omega(x)=\sum_{i=0}^\infty w_{B_{r_i}(x_i)}=\sum_{i=0}^{\infty} \frac{(r_i^2-|x-x_i|^2)_+}{2\,N}.
\]
\begin{figure}[h]
\includegraphics[scale=.25]{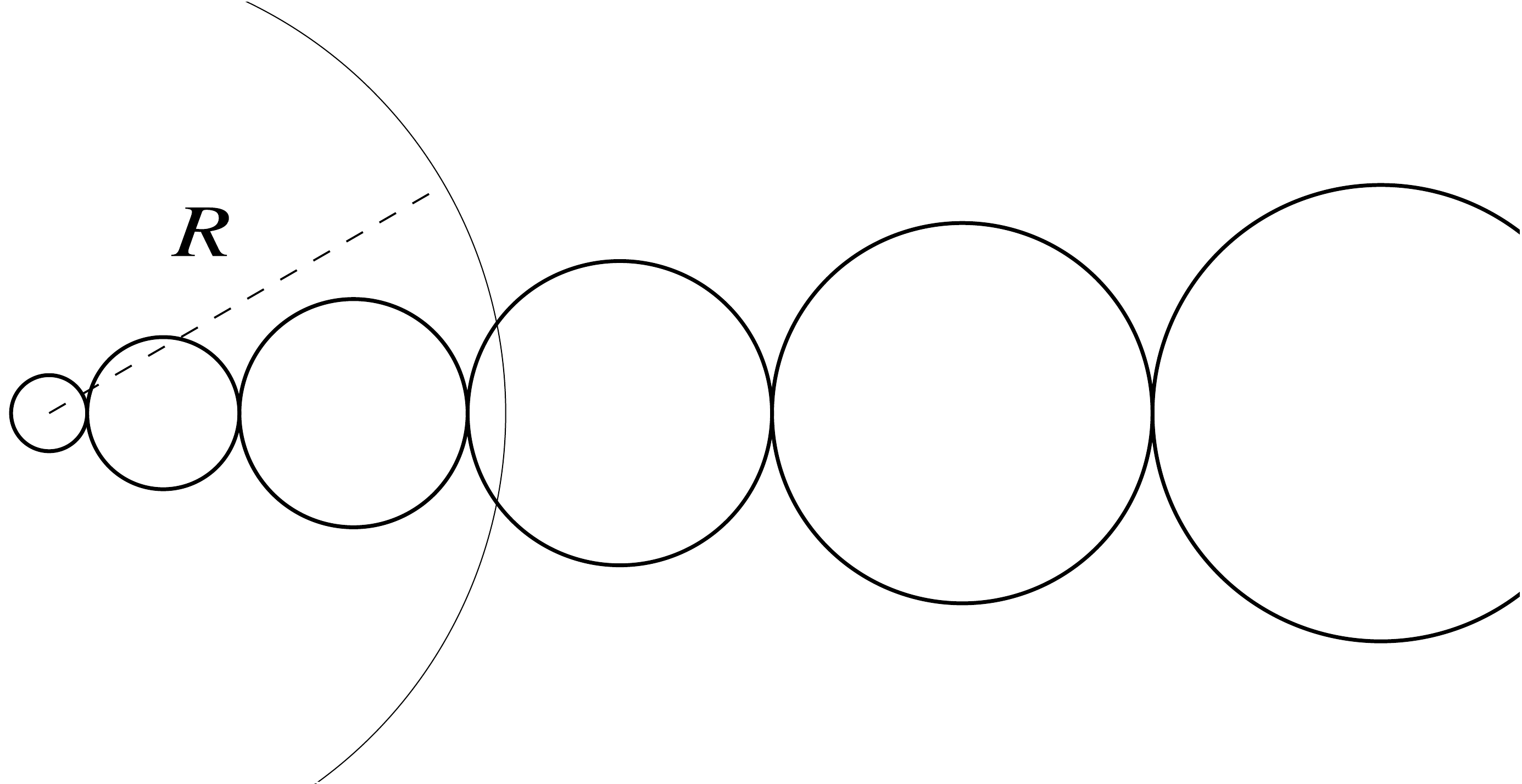}
\caption{The set of Example \ref{exa:locale?} when $r_i\nearrow +\infty$. The relevant torsion function is in $L^\infty_{loc}(\Omega)\setminus L^\infty(\Omega)$ and $\lambda_{p,p}(\Omega)=0$.}
\end{figure}
\par
We notice that $w_\Omega\in L^\infty_{loc}(\Omega)$ and we have
\[
w_\Omega\in L^\infty(\Omega)\qquad \Longleftrightarrow\qquad \limsup_{i\to\infty} r_i<+\infty.
\]
In this case $\lambda_{p,p}(\Omega)>0$ by Theorem \ref{thm:collection}.
For $1\le q<2$, by observing that
\[
\int_\Omega w_\Omega^\frac{q}{2-q}\,dx\simeq \sum_{i=1}^\infty \int_{B_{r_i}} (r_i^2-|x|^2)^\frac{q}{2-q}\,dx\simeq \sum_{i=1}^\infty r_i^{\frac{2\,q}{2-q}+N},
\] 
we have
\begin{equation}\label{sommabilitapalle}
w_\Omega\in L^\frac{q}{2-q}(\Omega)\qquad \Longleftrightarrow \qquad \sum_{i=1}^\infty r_i^{\frac{2\,q}{2-q}+N}<+\infty.
\end{equation}
In this case $\lambda_{p,q}(\Omega)>0$ and $W^{1,p}_0(\Omega)\hookrightarrow L^q(\Omega)$ is compact by Theorem \ref{thm:main}.
\end{example}

\begin{remark}
\label{rem:trenoedisperazione}
By exploiting Example \ref{exa:locale?}, it is not difficult to show that for every $0<s<1$ there exists an open set $\Omega\subset\mathbb{R}^N$ such that $w_\Omega\in L^1(\Omega)\setminus L^s(\Omega)$ (this in particular means that the embedding $W^{1,p}_0(\Omega)\hookrightarrow L^1(\Omega))$ is compact, by Theorem \ref{thm:main}). Indeed, with the notations of the previous example in force, by taking $\Omega$ as in \eqref{palloni} we only have to show that there exists a sequence $\{r_i\}_{i\in\mathbb N}$ such that 
\[
\sum_{i=1}^{\infty} r_i^{2+N}<+\infty\qquad\mbox{ and }\qquad  \sum_{i=1}^{\infty} r_i^{2\,s+N}=\infty.
\]
Thanks to \eqref{sommabilitapalle}, this would entail that $w_\Omega\in L^1(\Omega)$, while $w_\Omega\not\in L^s(\Omega)$ (observe that $s=q/(2-q)$ implies $2\,q/(2-q)=2\,s$).
As a straightforward computation shows, an example of such a sequence is offered by the choice $r_i=i^{-1/(2\,s+N)}$. 
\end{remark}

\section{Sharpness of the torsional Hardy inequality}
\label{sec:6}

Since the essential ingredient of the lower bounds in \eqref{lambda1} and \eqref{comprise} is the torsional Hardy inequality \eqref{HardyButtazzo}, it is natural to address the question of its sharpness. Though sharpness of \eqref{HardyButtazzo} is not a warranty of optimality of the estimates \eqref{lambda1} and \eqref{comprise}, we believe this question to be of independent interest.
As we will see, the following value of the parameter $\delta>0$ in \eqref{HardyButtazzo}
\[
\delta=\left(\frac{p}{p-1}\right)^{p-1},
\]
will play a crucial role. We warn the reader that for simplicity in this section we will make the stronger assumption $|\Omega|<+\infty$.
\vskip.2cm\noindent
We start with a standard consequence of the Harnack inequality.
\begin{lemma}
\label{u>0}
Let $1<p<+\infty$ and let $\Omega\subset\mathbb{R}^N$ be an open connected set with finite measure. Let $\delta\ge 1$ and suppose that $u\in W^{1,p}_0(\Omega)$ is a nontrivial function attaining the equality in \eqref{suboptimalHB}. Then $u$ does not change sign and for every compact set $K\Subset\Omega$ there exists a constant $C_{K,u}>0$ such that 
\begin{equation}
\label{dalbasso}
|u|\ge \frac{1}{C_{K,u}},\qquad \mbox{ on }K.
\end{equation}
\end{lemma}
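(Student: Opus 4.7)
The plan is to extract an Euler--Lagrange PDE from the equality hypothesis and then apply the strong minimum principle for nonnegative $p$-supersolutions. Consider the (nonnegative) deficit functional
\[
F[v]:=\int_\Omega |\nabla v|^p\,dx-\frac{p-1}{\delta}\int_\Omega \left[\left(1-\delta^{-\frac{1}{p-1}}\right)\left|\frac{\nabla w_\Omega}{w_\Omega}\right|^p+\frac{1}{(p-1)\,w_\Omega^{p-1}}\right]|v|^p\,dx,
\]
which is well-defined and of class $C^1$ on $W^{1,p}_0(\Omega)$ thanks to the integrability properties \eqref{integrability}, and pointwise nonnegative by Theorem \ref{thm:delta}. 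Writing $u=u_+-u_-$ with $u_\pm\ge 0$ having disjoint supports, a direct computation (using that $\nabla u_+$ and $\nabla u_-$ have disjoint supports and $|u|^p=u_+^p+u_-^p$) gives $F[u]=F[u_+]+F[u_-]$, so the equality hypothesis $F[u]=0$ forces $F[u_+]=F[u_-]=0$.

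Hence any nontrivial one of $u_+$, $u_-$ is itself a minimizer of $F$, and therefore a weak solution of the Euler--Lagrange equation
\[
-\Delta_p v=V\,v^{p-1}\quad\text{in }\Omega,\qquad V:=\frac{p-1}{\delta}\left[\left(1-\delta^{-\frac{1}{p-1}}\right)\left|\frac{\nabla w_\Omega}{w_\Omega}\right|^p+\frac{1}{(p-1)\,w_\Omega^{p-1}}\right].
\]
The hypothesis $\delta\ge 1$ enters crucially here, yielding $\delta^{-1/(p-1)}\le 1$ and hence $V\ge 0$ pointwise. Since $\Omega$ is connected, the strong minimum principle for nonnegative $p$-supersolutions (equivalently, Trudinger's weak Harnack inequality) forces each nontrivial one of $u_\pm$ to be strictly positive throughout $\Omega$. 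As $u_+$ and $u_-$ have disjoint supports, they cannot both be nontrivial, so $u$ has a definite sign.

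Assuming without loss of generality that $u=u_+\ge 0$, the quantitative form of the weak Harnack inequality applied on balls $B_{2r}\Subset\Omega$ yields $\inf_{B_r} u\ge c_r\,\|u\|_{L^s(B_r)}>0$ for some $s>0$; a finite covering of any compact $K\Subset\Omega$ by such balls then produces the constant $C_{K,u}>0$ in \eqref{dalbasso}.

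The main obstacle I anticipate is the rigorous justification of the Euler--Lagrange equation and of the application of Harnack's inequality: both require $V\in L^\infty_{loc}(\Omega)$, which rests on the strict interior positivity of $w_\Omega$ (from Lemma \ref{lm:guidone} together with the strong minimum principle applied to $w_\Omega$ itself) and its $C^{1,\alpha}_{loc}$ regularity. Once these are in place, the standard theory of the $p$-Laplacian handles the remaining steps.
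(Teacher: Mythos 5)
Your argument is correct and follows essentially the same route as the paper's: derive the Euler--Lagrange equation $-\Delta_p v = V\,v^{p-1}$ from minimality of the deficit functional, use $\delta\ge 1$ to get $V\ge 0$, and invoke the Harnack/strong minimum principle to obtain strict interior positivity and the compact lower bound. The paper works directly with $v=|u|$ (which is also a minimizer since the functional depends only on $|u|$ and $|\nabla u|$), while you decompose $u=u_+-u_-$ and use $F[u]=F[u_+]+F[u_-]$ with disjoint supports; your variant has the small merit of making the ``$u$ does not change sign'' conclusion completely explicit, whereas the paper leaves that step implicit after establishing $|u|>0$.
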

\begin{proof}
If $u\not\equiv 0$ is an optimal function, then $u$ minimizes in $W^{1,p}_0(\Omega)$ the functional
\[
\mathfrak{F}(\varphi)=\int_{\Omega}|\nabla \varphi|^p\,dx-\int_\Omega g\,|\varphi|^p\,dx,
\]
where
\[
g(x)=\frac{p-1}{\delta}\,\left[\left(1-\delta^{-\frac{1}{p-1}}\right)\,\left|\frac{\nabla w_\Omega}{w_\Omega}\right|^p+\frac{1}{(p-1)\,w_\Omega^{p-1}} \right].
\]
Since $\delta\ge1$, we have $g>0$. Then $v=|u|\in W^{1,p}_0(\Omega)$ is still a minimizer and it is of course positive. The relevant Euler-Lagrange equation is given by
\[
\left\{
\begin{array}{rcll}
-\Delta_p v&=&g\,v^{p-1},& \text{ in } \Omega,\\
v&=&0,& \mbox{ on } \partial \Omega.
\end{array}\right.
\]
In particular, $v$ is a positive local weak solution of 
\[
-\Delta_p v=g\,v^{p-1},
\]
and observe that $g\in L^\infty_{loc}(\Omega)$. Then $v$ satisfies Harnack inequality and thus $v\in C^{0,\alpha}_{loc}(\Omega)$ for some $0<\alpha<1$ (see for instance \cite[Theorem 1.1]{Tr}). Moreover, still by Harnack inequality $v$ verifies \eqref{dalbasso}. By recalling that $v=|u|$ and using the properties above for $v$, we get the desired result.
\end{proof}
\begin{proposition}[Existence of extremals]
Let $1<p<+\infty$ and let $\Omega\subset\mathbb{R}^N$ be an open set with finite measure. Assume that
\begin{equation}
\label{YES}
0<\delta<\left(\frac{p}{p-1}\right)^{p-1},
\end{equation}
then functions of the type
\begin{equation}
\label{estremali}
u=c\, w_\Omega^{\delta^{-\frac{1}{p-1}}},\qquad c\in\mathbb{R},
\end{equation}
give equality in \eqref{HardyButtazzo}.
\end{proposition}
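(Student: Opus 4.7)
The starting observation is that, in the derivation of \eqref{HardyButtazzo}, the only non-equality used is Young's inequality \eqref{giovane!} applied to the vectors $z,\xi$ displayed in \eqref{ab}. Thus a candidate extremal must make Young's inequality an equality, which forces $z$ and $\xi$ to be parallel with $|z|^p=|\xi|^{p/(p-1)}$. Writing $u=c\,w_\Omega^{\alpha}$ for an exponent $\alpha$ to be determined, we have $\nabla u=c\,\alpha\,w_\Omega^{\alpha-1}\,\nabla w_\Omega$, so both vectors point along $\nabla w_\Omega$; the scalar condition $|z|^p=|\xi|^{p/(p-1)}$ becomes $\delta\,\alpha^{p}=\delta^{-1/(p-1)}$, i.e. $\alpha=\delta^{-1/(p-1)}$. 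This both motivates the ansatz \eqref{estremali} and pins down the exponent.

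Having identified the candidate, the first task is to check that $u=c\,w_\Omega^{\alpha}\in W^{1,p}_{0}(\Omega)$. The standing hypothesis $|\Omega|<+\infty$ combined with the Poincar\'e inequality gives that $W^{1,p}_0(\Omega)\hookrightarrow L^1(\Omega)$ is compact, so by Proposition \ref{prop:limitazza} we have $w_\Omega\in L^\infty(\Omega)$. The assumption $0<\delta<(p/(p-1))^{p-1}$ translates into $\alpha>(p-1)/p$. If $\alpha\ge 1$ then $w_\Omega^{\alpha}\in W^{1,p}_0(\Omega)$ by Lemma \ref{lm:VF} (iv); if $(p-1)/p<\alpha<1$ then $\alpha p-p+1>0$ and $w_\Omega^{\alpha p-p+1}\in L^\infty(\Omega)\subset L^1(\Omega)$ since $|\Omega|<+\infty$, so Lemma \ref{lm:VF} (iii) applies. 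In either case $u\in W^{1,p}_0(\Omega)$.

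With membership secured, the computation reduces to showing that for this particular $u$ both sides of \eqref{HardyButtazzo} coincide. Setting $A:=\int_\Omega |\nabla w_\Omega|^p\,w_\Omega^{\alpha p-p}\,dx$ and $B:=\int_\Omega w_\Omega^{\alpha p-p+1}\,dx$, a direct substitution of $u=c\,w_\Omega^\alpha$ yields
\[
\int_\Omega|\nabla u|^p\,dx=|c|^p\,\alpha^p\,A,
\]
while the left-hand side of \eqref{HardyButtazzo} equals
\[
|c|^p\,\frac{p-1}{\delta}\,\left[(1-\alpha)\,A+\frac{B}{p-1}\right].
\]
The crucial link between $A$ and $B$ is provided by testing the equation \eqref{weakeqw} with $w_\Omega^{\alpha p-p+1}$ (an admissible test function either directly or, when $\alpha p-p+1<1$, via the approximation $\varphi_\varepsilon$ of \eqref{phieps} used in Lemma \ref{lm:VF}), which yields
\[
(\alpha p-p+1)\,A=B.
\]
Substituting this identity, the left-hand side simplifies to $(|c|^p/\delta)\,[(p-1)(1-\alpha)+(\alpha p-p+1)]\,A=(|c|^p\,\alpha/\delta)\,A$, and the equality $\alpha^p=\alpha/\delta$ (equivalent to $\alpha=\delta^{-1/(p-1)}$) closes the argument.

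The main obstacle I anticipate is the justification of $w_\Omega^{\alpha p-p+1}$ as a test function when $\alpha p-p+1<1$, since then this power is not Lipschitz at the origin. This is handled exactly as in the proof of Lemma \ref{lm:VF}: one tests with $\varphi_\varepsilon=(w_\Omega+\varepsilon)^{\alpha p-p+1}-\varepsilon^{\alpha p-p+1}$, obtains the identity up to $\varepsilon$-errors, and then passes to the limit using monotone convergence together with the integrability already established ($w_\Omega\in L^\infty(\Omega)$ and $|\Omega|<+\infty$). All remaining steps are elementary algebra.
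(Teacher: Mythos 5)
Your proof is correct and follows essentially the same route as the paper's: plug the power ansatz into both sides and reduce to the identity $(\alpha p - p + 1)\,A = B$, which you (like the paper) obtain by testing the torsion equation with an approximation of $w_\Omega^{\alpha p - p + 1}$. The paper's only cosmetic differences are a case split on $\delta$ at $\left(p^2/(p^2-1)\right)^{p-1}$ rather than at $1$ (so that the test function is used directly whenever its exponent exceeds $(p-1)/p$), and the use of dominated rather than monotone convergence to pass to the limit in the approximant; your streamlined version is equally valid.
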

\begin{proof}
We first observe that hypothesis \eqref{YES} implies that
\[
\delta^{-\frac{1}{p-1}}>\frac{p-1}{p},
\]
so that by Lemma \ref{lm:VF}, functions of the type \eqref{estremali} are in $W^{1,p}_0(\Omega)$. Then the proof is by direct verification. 
Indeed, let us take for simplicity $c=1$, then we get
\begin{equation}
\label{droite}
\int_\Omega |\nabla u|^p=\delta^{-\frac{p}{p-1}}\, \int_\Omega |\nabla w_\Omega|^p\, w_\Omega^{p\,\delta^{-\frac{1}{p-1}}-p}\,dx,
\end{equation}
and
\begin{equation}
\label{gauche}
\begin{split}
\frac{p-1}{\delta}\,\int_\Omega &\left[\left(1-\delta^{-\frac{1}{p-1}}\right)\,\left|\frac{\nabla w_\Omega}{w_\Omega}\right|^p+\frac{1}{(p-1)\,w_\Omega^{p-1}} \right]\,|u|^p\,dx\\
&=\frac{p-1}{\delta}\,\left(1-\delta^{-\frac{1}{p-1}}\right)\int_\Omega \,|\nabla w_\Omega|^p\, w_\Omega^{p\,\delta^{-\frac{1}{p-1}}-p}\,dx+\frac{1}{\delta}\,\int_\Omega w_\Omega^{p\,\delta^{-\frac{1}{p-1}}-p+1}\,dx.
\end{split}
\end{equation}
We now have to distinguish two cases:
\begin{equation}
\label{dupalle}
0<\delta<\left(\frac{p^2}{p^2-1}\right)^{p-1}\qquad \mbox{ or }\qquad \left(\frac{p^2}{p^2-1}\right)^{p-1}\le\delta<\left(\frac{p}{p-1}\right)^{p-1}.
\end{equation}
In the first case, we can insert in \eqref{weakeqw} the test function\footnote{This is a legitimate test function by Lemma \ref{lm:VF}, since 
\[
p\,\delta^{-\frac{1}{p-1}}-p+1>\frac{p-1}{p}\qquad\Longleftrightarrow\qquad 0<\delta<\left(\frac{p^2}{p^2-1}\right)^{p-1},
\]
and the latter is true by hypothesis.}
\begin{equation}
\label{test!}
\phi=w_\Omega^{p\,\delta^{-\frac{1}{p-1}}-p+1},
\end{equation}
then we get
\begin{equation}
\label{salvi}
\frac{1}{\delta}\,\int_\Omega w_\Omega^{p\,\delta^{-\frac{1}{p-1}}-p+1}\,dx=\left[-\frac{p-1}{\delta}\,\left(1-\delta^{-\frac{1}{p-1}}\right)+\delta^{-\frac{p}{p-1}}\right]\,\int_\Omega |\nabla w_\Omega|^p\,w_\Omega^{p\,\delta^{-\frac{1}{p-1}}-p}\,dx.
\end{equation}
By using this in \eqref{gauche} and comparing with \eqref{droite}, we get the conclusion.
\vskip.2cm\noindent
If on the contrary the second condition in \eqref{dupalle} is verified, some care is needed. Indeed, now the choice \eqref{test!} is not feasible for the equation \eqref{weakeqw}. We thus need to replace it by
\[
\phi_n=(w_\Omega+\varepsilon_n)^{p\,\delta^{-\frac{1}{p-1}}-p+1}-\varepsilon_n^{p\,\delta^{-\frac{1}{p-1}}-p+1},
\] 
where $\{\varepsilon_n\}_{n\in\mathbb{N}}\subset (0,+\infty)$ is an infinitesimal strictly decreasing sequence.
Then from \eqref{weakeqw} we get
\[
\begin{split}
\frac{1}{\delta}\,\int_\Omega &\Big[(w_\Omega+\varepsilon_n)^{p\,\delta^{-\frac{1}{p-1}}-p+1}-\varepsilon_n^{p\,\delta^{-\frac{1}{p-1}}-p+1}\Big]\,dx\\
&=\left[-\frac{p-1}{\delta}\,\left(1-\delta^{-\frac{1}{p-1}}\right)+\delta^{-\frac{p}{p-1}}\right]\,\int_\Omega |\nabla w_\Omega|^p\,(w_\Omega+\varepsilon_n)^{p\,\delta^{-\frac{1}{p-1}}-p}\,dx.
\end{split}
\]
If we now use the Dominated Convergence Theorem on both sides, we obtain as before \eqref{salvi} and thus we get again the desired conclusion.
\end{proof}
\begin{proposition}[Lack of extremals]
\label{prop:sharpNO}
Let $1<p<+\infty$ and let $\Omega\subset\mathbb{R}^N$ be an open connected set with finite measure. If
\begin{equation}
\label{NO}
\delta\ge\left(\frac{p}{p-1}\right)^{p-1},
\end{equation}
equality in \eqref{HardyButtazzo} is not attained in $W^{1,p}_0(\Omega)\setminus\{0\}$.
\end{proposition}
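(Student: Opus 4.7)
The plan is to argue by contradiction: we show that any nontrivial $u\in W^{1,p}_0(\Omega)$ attaining equality in \eqref{HardyButtazzo} must be a positive multiple of $w_\Omega^\alpha$ with the critical exponent $\alpha := \delta^{-1/(p-1)}\le (p-1)/p$, and then Lemma~\ref{lm:VF}\,(i) will rule out membership of this function in $W^{1,p}_0(\Omega)$.

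Since \eqref{NO} entails $\delta\ge 1$, Lemma~\ref{u>0} applies: up to a sign we may take $u\ge 0$, with $u\ge 1/C_K>0$ on every compact $K\Subset\Omega$. We then trace the equality case of the Young inequality used in the proof of Theorem~\ref{thm:delta}. With $z,\xi$ chosen as in \eqref{ab}, let $G_\varepsilon:=\tfrac{1}{p}|z|^p+\tfrac{p-1}{p}|\xi|^{p/(p-1)}-\langle z,\xi\rangle\ge 0$ denote the pointwise Young defect. Combining the exact identity obtained by testing \eqref{weakeqw} with $u^p(w_\Omega+\varepsilon)^{1-p}$ with the Young-bounded inequality and rearranging gives
\[
p\int_\Omega G_\varepsilon\,dx \;=\; \delta\int_\Omega|\nabla u|^p\,dx \;-\; \int_\Omega \frac{u^p}{(w_\Omega+\varepsilon)^{p-1}}\,dx \;-\; (p-1)\bigl(1-\delta^{-\frac{1}{p-1}}\bigr)\int_\Omega \frac{u^p|\nabla w_\Omega|^p}{(w_\Omega+\varepsilon)^p}\,dx.
\]
As $\varepsilon\downarrow 0$, the two subtracted integrals converge monotonically to their $\varepsilon=0$ counterparts (finite by \eqref{integrability}); the extremality of $u$ then forces $\int_\Omega G_\varepsilon\,dx\to 0$. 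Since $w_\Omega>0$ a.e.\ in $\Omega$, we have $G_\varepsilon\to G_0$ pointwise a.e., and Fatou's Lemma yields $G_0\equiv 0$ a.e. This is precisely the pointwise equality case of Young's inequality at $\varepsilon=0$; unfolding the condition $\xi|_{\varepsilon=0}=|z|^{p-2}z$ gives
\[
\nabla u \;=\; \alpha\,\frac{u}{w_\Omega}\,\nabla w_\Omega\qquad\text{a.e.\ in }\Omega.
\]

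Setting $v:=u\,w_\Omega^{-\alpha}$ (well-defined by the pointwise positivity of $u$ and $w_\Omega$), the identity above yields $\nabla v=0$ a.e.; connectedness of $\Omega$ then forces $v\equiv c$ for some constant $c>0$, so $u=c\,w_\Omega^\alpha$. Since \eqref{NO} implies $\alpha\le (p-1)/p$, Lemma~\ref{lm:VF}\,(i) asserts $w_\Omega^\alpha\notin W^{1,p}(\Omega)$, contradicting $u\in W^{1,p}_0(\Omega)$.

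The main obstacle is the equality-case analysis: one must simultaneously invoke monotone convergence (to drive the explicit integrals to their $\varepsilon=0$ limits), Fatou's Lemma (to transfer the vanishing of $\int G_\varepsilon$ to the pointwise vanishing of $G_0$), and the duality characterizing equality in the vectorial Young inequality. Once the ODE-like identity for $\nabla u$ is established, the remaining steps are a short integration using connectedness of $\Omega$ together with the nonmembership of $w_\Omega^\alpha$ in $W^{1,p}(\Omega)$ at the critical exponent.
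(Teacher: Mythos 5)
Your proof is correct and reaches the same crux as the paper's — equality in \eqref{HardyButtazzo} forces pointwise equality in Young's inequality a.e., hence $u = c\, w_\Omega^{\delta^{-1/(p-1)}}$, which Lemma~\ref{lm:VF}\,(i) excludes from $W^{1,p}_0(\Omega)$ under \eqref{NO} — but the route to pointwise equality is genuinely different. The paper upgrades \eqref{HardyButtazzo} to a version carrying an explicit remainder $\mathcal{R}_{p,\Omega}(u)$ by using the quantitative Young inequalities of Appendix~\ref{sec:A} (Propositions~\ref{prop:giovanesopra2} and~\ref{prop:giovanesotto2}), which requires separating the cases $p\ge 2$ and $1<p<2$; vanishing of $\mathcal{R}_{p,\Omega}(u)$ then gives $\nabla u/u=\delta^{-1/(p-1)}\nabla w_\Omega/w_\Omega$ directly. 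You instead keep the plain Young inequality, introduce the nonnegative pointwise defect $G_\varepsilon$, derive the exact identity for $p\int_\Omega G_\varepsilon\,dx$ from testing \eqref{weakeqw}, pass to the limit by monotone convergence (the finiteness of the $\varepsilon=0$ integrals being secured by \eqref{integrability}), and then transfer $\int_\Omega G_\varepsilon\to 0$ to $G_0\equiv 0$ a.e.\ via Fatou and $G_\varepsilon\ge 0$. This is more elementary — it bypasses the appendix entirely and treats all $1<p<\infty$ uniformly — at the price of the limiting/Fatou step. Both proofs then finish identically by integrating the resulting first-order relation on the connected set, and you correctly invoke Lemma~\ref{u>0} for this. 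One small wording caution: when you set $v:=u\,w_\Omega^{-\alpha}$ and deduce $v\equiv c$ from $\nabla v=0$ a.e., the justification is \emph{not} mere a.e.\ positivity of $u$ (the paper's remark after Proposition~\ref{prop:sharpNO} warns precisely against this), but the local lower bound $u\ge 1/C_K$ from Lemma~\ref{u>0} together with $w_\Omega\in C^{1}_{loc}$ and $w_\Omega>0$ in $\Omega$, which guarantee $v\in W^{1,1}_{loc}(\Omega)$ and validate the chain rule; you should say this explicitly rather than "pointwise positivity."
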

\begin{proof}
We first notice that by using the quantitative version of Young inequality (see Propositions \ref{prop:giovanesopra2} and \ref{prop:giovanesotto2} below) in place of \eqref{giovane!}, we can show the following stronger version of \eqref{HardyButtazzo} 
\[
\begin{split}
\frac{p-1}{\delta}\,\int_\Omega \left[\left(1-\delta^{-\frac{1}{p-1}}\right)\,\left|\frac{\nabla w_\Omega}{w_\Omega}\right|^p+\frac{1}{(p-1)\,w_\Omega^{p-1}} \right]\,|u|^p+\frac{C}{\delta}\, \mathcal{R}_{p,\Omega}(u)&\le \int_\Omega |\nabla
u|^p\,dx.\\
\end{split}
\]
Here $C=C(p)>0$ is a constant and the remainder term $\mathcal{R}_{p,\Omega}(u)$ is given by
\[
\begin{split}
\mathcal{R}_{p,\Omega}(u)=\int_\Omega &\left|\delta^\frac{1}{p}\,\nabla u-\delta^{-\frac{1}{p\,(p-1)}}\,u\,\frac{\nabla w_\Omega}{w_\Omega}\right|^p\,dx,\qquad \mbox{ if } p\ge 2,
\end{split}
\]
or
\[
\begin{split}
\mathcal{R}_{p,\Omega}(u)&=\int_\Omega\left[\delta^\frac{2}{p}\,|\nabla u|^2+\delta^{-\frac{2}{p\,(p-1)}}\,u^2\,\left|\frac{\nabla w_\Omega}{w_\Omega}\right|^2\right]^\frac{p-2}{2}\\ &\times\left|\delta^\frac{1}{p}\,\nabla u-\delta^{-\frac{1}{p-1}}\,u\,\frac{\nabla w_\Omega}{w_\Omega}\right|^2\,dx,\qquad \mbox{ if } 1<p<2.
\end{split}
\]
Thus if $u\in W^{1,p}_0(\Omega)\setminus\{0\}$ is such that equality holds in \eqref{HardyButtazzo}, then necessarily $\mathcal{R}_{p,\Omega}(u)=0$. This yields
\[
\frac{\nabla u}{u}=\delta^{-\frac{1}{p-1}}\, \frac{\nabla w_\Omega}{w_\Omega},
\]
and observe that by Lemma \ref{u>0} we can suppose that $u$ is positive (by assumption we have $\delta>1$). Then we arrive at
\[
\log u=\delta^{-\frac{1}{p-1}}\,\log w_\Omega+c,\qquad \mbox{ a.\,e. in }  \Omega.
\]
Notice that it is possible to divide by $u$ thanks to Lemma \ref{u>0}. From the previous identity we obtain 
\begin{equation}
\label{proportional}
u=c'\, w_\Omega^{\delta^{-\frac{1}{p-1}}},\qquad 
\end{equation}
almost everywhere in $\Omega$ for some constant $c'\not=0$. Observe that the hypothesis \eqref{NO} on $\delta$ implies that $\delta^{-1/(p-1)}\le (p-1)/p$. Thus 
thanks to Lemma \ref{lm:VF} we get a contradiction with the fact that $u\in W^{1,p}_0(\Omega)$.
\end{proof}
\begin{remark}
We recall that the weaker information ``{\it $u>0$ almost everywhere}'' could not be sufficient to conclude \eqref{proportional}. Indeed, one can find functions $u$ and $v$ such that
\[
\nabla u=u\,\frac{\nabla v}{v},\quad \mbox{ for a.\,e. }x\in\Omega\qquad \mbox{ and }\qquad u,v>0\quad \mbox{ for a.\,e. }x\in\Omega,
\] 
but $u$ and $v$ are not proportional. A nice example of this type is in \cite[page 84]{LP}.
\end{remark}
Finally, let us give a closer look at the borderline case
\[
\delta=\left(\frac{p}{p-1}\right)^{p-1}.
\] 
In this case \eqref{HardyButtazzo} reduces to \eqref{suboptimalHB}. We already know that equality can not be attained. Nevertheless, the inequality is sharp.
\begin{proposition}[Borderline case]
\label{prop:sharpweak}
Let $1<p<+\infty$ and let $\Omega\subset\mathbb{R}^N$ be an open set with finite measure. There exists a sequence $\{u_n\}_{n\in\mathbb{N}}\subset W^{1,p}_0(\Omega)\setminus\{0\}$ such that
\begin{equation}
\label{successione}
\lim_{n\to\infty}\frac{\displaystyle\int_\Omega |\nabla
u_n|^p\,dx}{\displaystyle \int_\Omega \left[\left|\frac{\nabla w_\Omega}{w_\Omega}\right|^p+\frac{p}{(p-1)\,w_\Omega^{p-1}} \right]\,|u_n|^p\,dx}=
\left(\frac{p-1}{p}\right)^{p}.
\end{equation}
\end{proposition}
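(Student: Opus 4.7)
The natural formal extremal for \eqref{suboptimalHB} is $u=w_\Omega^{(p-1)/p}$: this is precisely the function that turns the application of Young's inequality \eqref{giovane!} inside the proof of Proposition \ref{thm:disuguaglianzadebole} into an equality, as then the vectors $z$ and $\xi$ chosen there become proportional. However, by Lemma \ref{lm:VF}~(i), this function does \emph{not} belong to $W^{1,p}_0(\Omega)$. The plan is therefore to approximate it by
\[
u_n=w_\Omega^{\beta_n},\qquad \beta_n\searrow \frac{p-1}{p}.
\]
Each $u_n$ belongs to $W^{1,p}_0(\Omega)\setminus\{0\}$ by Lemma \ref{lm:VF}~(iii); the integrability hypothesis $w_\Omega^{\beta_n p-p+1}\in L^1(\Omega)$ needed there is automatic, since $|\Omega|<+\infty$ forces $w_\Omega\in L^\infty(\Omega)$ via Proposition \ref{prop:limitazza}.

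The core of the argument is an explicit evaluation of the ratio in \eqref{successione} for $u=w_\Omega^\beta$ with $\beta>(p-1)/p$. A direct differentiation shows that both $\int_\Omega |\nabla u|^p\,dx$ and $\int_\Omega |\nabla w_\Omega/w_\Omega|^p\,|u|^p\,dx$ are explicit multiples of
\[
I_1(\beta):=\int_\Omega |\nabla w_\Omega|^p\,w_\Omega^{p\beta-p}\,dx,
\]
while $\int_\Omega |u|^p\,w_\Omega^{-(p-1)}\,dx$ equals $I_2(\beta):=\int_\Omega w_\Omega^{p\beta-p+1}\,dx$. To tie $I_1(\beta)$ to $I_2(\beta)$, I plan to test the equation \eqref{weakeqw} against the regularized power $\phi_\varepsilon=(w_\Omega+\varepsilon)^{p\beta-p+1}-\varepsilon^{p\beta-p+1}$, in full analogy with the proof of Lemma \ref{lm:VF}, and then pass to the limit $\varepsilon\to 0^+$. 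Monotone convergence on the left-hand side (whose limit is finite thanks to $u\in W^{1,p}_0(\Omega)$) and dominated convergence on the right-hand side give the clean identity
\[
(p\beta-p+1)\,I_1(\beta)=I_2(\beta).
\]
Substituting this back, the ratio in \eqref{successione} collapses to the explicit function
\[
R(\beta)=\frac{\beta^p\,(p-1)}{p^2\,(\beta-1)+2p-1}.
\]

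To close, it suffices to let $\beta\searrow (p-1)/p$ in $R(\beta)$: a short arithmetic check gives the limit $((p-1)/p)^p$, which is precisely \eqref{successione}. I do not anticipate any genuine obstacle; the only mildly delicate point is the $\varepsilon\to 0$ justification in the integral identity above, but this follows the same regularization template already used repeatedly in the paper and relies only on bounds that are in hand from the verification of admissibility of $u_n$.
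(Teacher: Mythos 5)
Your proof is correct, and it takes a genuinely different route from the paper. Both use the same trial functions $u_n = w_\Omega^{\beta_n}$ with $\beta_n\searrow (p-1)/p$, but from there the strategies diverge. The paper's proof is a two-sided squeeze: the lower bound $\ge ((p-1)/p)^p$ comes for free from the already-established inequality \eqref{suboptimalHB}, and the upper bound $\le \left(\frac{p-1}{p}+\frac{1}{n}\right)^p$ follows by simply discarding the positive term $\frac{p}{p-1}\int w_\Omega^{p/n}\,dx$ from the denominator — no integral identity is needed. You instead compute the ratio exactly, by first testing \eqref{weakeqw} with the regularized power $\phi_\varepsilon=(w_\Omega+\varepsilon)^{p\beta-p+1}-\varepsilon^{p\beta-p+1}$ and passing to the limit (this is the same regularization device the paper uses in the range $\left(\frac{p^2}{p^2-1}\right)^{p-1}\le\delta<\left(\frac{p}{p-1}\right)^{p-1}$ of the existence proposition, and the limiting identity $(p\beta-p+1)\,I_1(\beta)=I_2(\beta)$ is precisely \eqref{salvi} after the substitution $\beta=\delta^{-1/(p-1)}$). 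Substituting into the ratio yields the closed form $R(\beta)=\frac{\beta^p(p-1)}{p^2(\beta-1)+2p-1}$, whose denominator tends to $p-1$ as $\beta\to(p-1)/p$, giving the claimed limit. Your computation is self-contained (it does not invoke the lower bound \eqref{suboptimalHB} as a black box) and gives the exact value of the quotient for each admissible power, which could be useful for rate estimates; the price is the $\varepsilon\to 0$ passage, which the paper's squeeze avoids entirely. Both arguments rely on $|\Omega|<+\infty$ to ensure $w_\Omega\in L^\infty(\Omega)$ and hence the admissibility of the powers $w_\Omega^{\beta_n}$, exactly as you note.
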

\begin{proof}
Let us consider the sequence of functions in $W^{1,p}_0(\Omega)$ given by 
\[
u_n=w_\Omega^{\frac{p-1}{p}+\frac{1}{n}},\qquad n\in\mathbb{N}.
\]
Observe that these functions belong to $W^{1,p}_0(\Omega)$ thanks to\footnote{The assumption $|\Omega|<+\infty$ guarantees that $w_\Omega\in L^q(\Omega)$, for every $0<q<1$.} Lemma \ref{lm:VF}.
We have
\[
u_n^p=w_\Omega^{p-1+\frac{p}{n}}\qquad \mbox{ and }\qquad |\nabla u_n|^p=\left(\frac{p-1}{p}+\frac{1}{n}\right)^p\,w_\Omega^{-1+\frac{p}{n}}|\nabla w_\Omega|^p.
\]
So we get
\[
\begin{split}
\left(\frac{p-1}{p}\right)^p 
 &\le \left(\frac{p-1}{p}+\frac{1}{n}\right)^p\left(\int_\Omega \frac{|\nabla
w_\Omega|^p}{w_\Omega^{1-\frac{p}{n}}}\,dx\right)\left(\int_\Omega \left[\left|\frac{\nabla w_\Omega}{w_\Omega}\right|^p+\frac{p}{(p-1)\,w_\Omega^{p-1}} \right]\,w_\Omega^{p-1+\frac{p}{n}}\,dx\right)^{-1}\\
&=\left(\frac{p-1}{p}+\frac{1}{n}\right)^p\left(\int_\Omega \frac{|\nabla
w_\Omega|^p}{w_\Omega^{1-\frac{p}{n}}}\,dx\right)\left(\int_\Omega \frac{|\nabla
w_\Omega|^p}{w_\Omega^{1-\frac{p}{n}}}\,dx+\int_\Omega \frac{p}{p-1}\,w_\Omega^\frac{p}{n}\,dx\right)^{-1}\\
&\le \left(\frac{p-1}{p}+\frac{1}{n}\right)^p.
 \end{split}
\]
By taking the limit as $n$ goes to $\infty$, we conclude \eqref{successione}.
\end{proof}
\begin{remark}
Actually, we are not able to decide whether sharpness holds {\it in the whole range}
\[
\delta\ge\left(\frac{p}{p-1}\right)^{p-1}.
\]
We leave this as an interesting open question.
\end{remark}

\appendix

\section{Convexity inequalities}
\label{sec:A}

In order to make the paper self-contained, we recall some results about uniform convexity of power functions. The proofs are well-known, thus we mainly omit them.
\subsection{Case $p\ge 2$}

\begin{lemma}
\label{lm:superquadrato}
Let $p\ge 2$. For every $z,w\in\mathbb{R}^N$ we have
\begin{equation}
\label{lambdaconvex}
\frac{1}{2}\,|z|^p+\frac{1}{2}\, |w|^p\ge \left|\frac{z+w}{2}\right|^p+C\,\left(|z|^2+|w|^2\right)^\frac{p-2}{2}\,|z-w|^2,
\end{equation}
for some constant $C=C(p)>0$.
\end{lemma}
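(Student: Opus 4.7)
The plan is to derive this as a straightforward consequence of the second-order Taylor expansion of $F(x):=|x|^p$ around the midpoint $m:=(z+w)/2$, exploiting the uniform convexity bound on the Hessian of $F$ that is available when $p\ge 2$. A direct computation yields
\[
D^2F(x)=p\,|x|^{p-2}\,I+p\,(p-2)\,|x|^{p-4}\,x\otimes x,
\]
and, since $p\ge 2$, both summands are positive semidefinite, so $\langle D^2F(x)\,\xi,\xi\rangle\ge p\,|x|^{p-2}\,|\xi|^2$ for every $x\neq 0$ and $\xi\in\mathbb{R}^N$. The function $F$ is of class $C^2(\mathbb{R}^N)$ for every $p\ge 2$, so Taylor's formula with integral remainder applies.

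Setting $z_t:=m+t\,(z-m)=\tfrac{1}{2}\,((1+t)\,z+(1-t)\,w)$ and $w_t:=m+t\,(w-m)$, I will write Taylor's formula with integral remainder at $z$ and $w$ based at $m$, and add the two identities. Since $(z-m)+(w-m)=0$, the linear terms involving $\nabla F(m)$ cancel, and using $|z-m|=|w-m|=|z-w|/2$ together with the Hessian bound I get
\[
|z|^p+|w|^p-2\,|m|^p\ge \frac{p\,|z-w|^2}{4}\,\int_0^1(1-t)\,\Bigl(|z_t|^{p-2}+|w_t|^{p-2}\Bigr)\,dt.
\]

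It then remains to bound this weight from below. Since $p-2\ge 0$, the elementary inequality $a^{p-2}+b^{p-2}\ge \max(a,b)^{p-2}\ge 2^{(2-p)/2}\,(a^2+b^2)^{(p-2)/2}$ reduces matters to controlling $|z_t|^2+|w_t|^2$. A direct expansion yields
\[
|z_t|^2+|w_t|^2=\frac{1+t^2}{2}\,\bigl(|z|^2+|w|^2\bigr)+(1-t^2)\,\langle z,w\rangle,
\]
and invoking the Cauchy-Schwarz bound $\langle z,w\rangle\ge -(|z|^2+|w|^2)/2$ together with $0\le 1-t^2\le 1$ gives $|z_t|^2+|w_t|^2\ge t^2\,(|z|^2+|w|^2)$. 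Inserting this estimate, computing $\int_0^1(1-t)\,t^{p-2}\,dt=1/(p\,(p-1))$, and dividing by two yields \eqref{lambdaconvex} with the explicit constant $C=(2^{p/2}\cdot 4\,(p-1))^{-1}$. The main delicate point is getting the sign right on the cross term $\langle z,w\rangle$: a crude bound $|z_t|\,,|w_t|\ge |m|$ would be useless when $m$ is small compared to $z$ and $w$ (e.g.\ $w\approx -z$), and the factor $t^2$ arising from the precise cancellation in the display above is precisely what keeps the weight from collapsing; everything else is routine.
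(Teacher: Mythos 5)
Your proof is correct. The paper declares the proofs in this appendix ``well-known'' and omits them, so there is no paper argument to compare against; your midpoint Taylor route is a clean and standard way to obtain the inequality. The key steps all check out: $F(x)=|x|^p$ is genuinely $C^2$ on all of $\mathbb{R}^N$ for $p\ge 2$ (the Hessian $p|x|^{p-2}I+p(p-2)|x|^{p-4}\,x\otimes x$ extends continuously by $0$ at the origin when $p>2$), the lower bound $\langle D^2F(x)\xi,\xi\rangle\ge p|x|^{p-2}|\xi|^2$ is valid, the identity $|z_t|^2+|w_t|^2=\tfrac{1+t^2}{2}(|z|^2+|w|^2)+(1-t^2)\langle z,w\rangle$ is right, and the resulting weight $t^2(|z|^2+|w|^2)$ is precisely what makes the integral survive the degenerate case $z+w\approx 0$, as you point out. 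The constant $C=(2^{p/2}\cdot 4(p-1))^{-1}$ follows from $\int_0^1(1-t)t^{p-2}\,dt=1/(p(p-1))$ and specializes at $p=2$ to $1/8$, which is consistent with (though, as expected from the crude bounds used, smaller than) the sharp parallelogram-law constant $1/4$.
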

\begin{proposition}[Young inequality with remainder term]
\label{prop:giovanesopra2}
Let $p\ge 2$. For every $z,\xi\in\mathbb{R}^N$ we have
\begin{equation}
\label{young}
\langle \xi,z\rangle\le \frac{1}{p}\, |z|^p+\frac{1}{p'}\,|\xi|^{p'}-\frac{2}{p}\,C\, \left(|z|^2+|\xi|^\frac{2}{p-1}\right)^\frac{p-2}{2}\,\left|z-|\xi|^{p'-2}\,\xi\right|^2,
\end{equation}
where $C=C(p)>0$ is the constant appearing in \eqref{lambdaconvex}. In particular, we also have
\begin{equation}
\label{young2}
\langle \xi,z\rangle\le \frac{1}{p}\, |z|^p+\frac{1}{p'}\,|\xi|^{p'}-C\, \left|z-|\xi|^{p'-2}\,\xi\right|^p,
\end{equation}
possibly with a different $C=C(p)>0$.
\end{proposition}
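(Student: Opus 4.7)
The plan is to deduce \eqref{young} directly from the midpoint uniform-convexity estimate of Lemma \ref{lm:superquadrato} by a substitution that turns Young's inequality into a tangent-line inequality for the convex function $v \mapsto |v|^p$. Set
\[
z^* := |\xi|^{p'-2}\,\xi,
\]
so that $\xi = |z^*|^{p-2}\,z^*$, $|z^*|^p = |\xi|^{p'}$, and $|z^*|^2 = |\xi|^{2/(p-1)}$. Observe that $z^*$ is precisely the point at which equality in classical Young's inequality is attained for the fixed $\xi$.

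First I would apply Lemma \ref{lm:superquadrato} to the pair $(z,z^*)$, obtaining
\[
\frac{1}{2}\,|z|^p + \frac{1}{2}\,|z^*|^p \;\ge\; \Big|\frac{z+z^*}{2}\Big|^p + C\,\big(|z|^2+|z^*|^2\big)^{(p-2)/2}\,|z-z^*|^2.
\]
Second, I would lower bound the midpoint term by the tangent line of $v \mapsto |v|^p$ at $z^*$, whose gradient is $p\,|z^*|^{p-2}z^* = p\,\xi$. Evaluated at $v = (z+z^*)/2$ this gives
\[
\Big|\frac{z+z^*}{2}\Big|^p \;\ge\; |z^*|^p + \frac{p}{2}\,\langle \xi, z - z^* \rangle \;=\; \Big(1 - \frac{p}{2}\Big)\,|z^*|^p + \frac{p}{2}\,\langle \xi, z\rangle,
\]
where I used $\langle \xi, z^*\rangle = |\xi|^{p'} = |z^*|^p$.

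Combining the two inequalities, the $|z^*|^p$ terms collapse into $\frac{p-1}{2}\,|z^*|^p$, and multiplying through by $2/p$ while using $\frac{p-1}{p} = \frac{1}{p'}$ and $|z^*|^p = |\xi|^{p'}$ yields \eqref{young} exactly, with the same constant $C$ as in Lemma \ref{lm:superquadrato}. The degenerate case $\xi = 0$ is trivially handled (the tangent-line step then reduces to an equality), provided $C$ is taken small enough that $C \le 1/2$, which we may always assume after shrinking. Finally, \eqref{young2} follows from \eqref{young} via the elementary estimate $|z|^2 + |\xi|^{2/(p-1)} \ge \frac{1}{2}\,|z - |\xi|^{p'-2}\xi|^2$, which upgrades the weight $(|z|^2 + |\xi|^{2/(p-1)})^{(p-2)/2}$ into $2^{-(p-2)/2}\,|z-|\xi|^{p'-2}\xi|^{p-2}$ and so turns the remainder term into $C'\,|z - |\xi|^{p'-2}\xi|^p$. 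I do not foresee any genuine obstacle: Lemma \ref{lm:superquadrato} carries all the analytic content, while the rest of the derivation is purely algebraic manipulation.
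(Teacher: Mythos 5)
Your proof is correct and takes essentially the same route as the paper: you apply Lemma \ref{lm:superquadrato} to the pair $(z, z^*)$ with $z^* = |\xi|^{p'-2}\xi$ and then use the tangent-line (supporting-hyperplane) inequality for $v\mapsto|v|^p$ at $z^*$, which is exactly the ``above tangent property'' step the paper invokes. Your passage from \eqref{young} to \eqref{young2} via $|z|^2+|\xi|^{2/(p-1)}\ge\frac12\,|z-|\xi|^{p'-2}\xi|^2$ is a minor cosmetic variant of the paper's argument (which uses concavity of $\sqrt{\cdot}$ plus the triangle inequality) and yields the same constant; the remark about shrinking $C$ to ensure $C\le 1/2$ is unnecessary, since taking $w=0$ in \eqref{lambdaconvex} already forces $C\le\frac12-2^{-p}<\frac12$, but it is harmless.
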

\begin{proof}
By using the ``above tangent property'' of a convex function in \eqref{lambdaconvex}, we get
\[
\langle |w|^{p-2}\,w,z\rangle\le \frac{1}{p}\,|z|^p+\left(1-\frac{1}{p}\right)\,|w|^{p}-\frac{2}{p}\,C\, (|z|^2+|w|^2)^\frac{p-2}{2}\, |z-w|^2.
\]
If we now make the choice $w=|\xi|^{p'-2}\,\xi$ in the previous inequality, we get the desired conclusion \eqref{young}.
\vskip.2cm
\noindent
In order to prove \eqref{young2}, it is sufficient to observe that by using the concavity of $t\mapsto \sqrt{t}$ and monotonicity of $t\mapsto t^{p-2}$, we get
\[
\left(|z|^2+|\xi|^\frac{2}{p-1}\right)^\frac{p-2}{2}\ge 2^\frac{2-p}{2}\,\left(|z|+|\xi|^\frac{1}{p-1}\right)^{p-2}.
\]
On the other hand, by triangle inequality
\[
\begin{split}
\left|z-|\xi|^{p'-2}\,\xi\right|^p
&\le \left(|z|+|\xi|^\frac{1}{p-1}\right)^{p-2}\, \left|z-|\xi|^{p'-2}\,\xi\right|^2.
\end{split}
\]
By using these two inequalities in \eqref{young}, we get \eqref{young2}.
\end{proof}

\subsection{Case $1<p<2$}

\begin{lemma}
Let $1<p<2$. For every $z,w\in\mathbb{R}^N$ such that $|z|^2+|w|^2\not=0$ we have
\begin{equation}
\label{lambdaconvex_sub}
\frac{1}{2}\,|z|^p+\frac{1}{2}\, |w|^p\ge \left|\frac{z+w}{2}\right|^p+C\,\left(|z|^2+|w|^2\right)^\frac{p-2}{2}\,|z-w|^2,
\end{equation}
for some constant $C=C(p)>0$.
\end{lemma}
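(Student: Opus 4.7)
The plan is to follow the blueprint of the superquadratic case (Lemma \ref{lm:superquadrato}): reduce the inequality to a pointwise lower bound on the Hessian of $F(v) := |v|^p$, and then combine it with an integral representation of the midpoint defect along the segment joining $w$ and $z$.

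Concretely, setting $\xi_r := (1-r)w + rz$ and $f(r) := F(\xi_r)$, two successive integrations by parts yield the identity
$$\frac{|z|^p+|w|^p}{2} \;-\; \left|\frac{z+w}{2}\right|^p \;=\; \frac{1}{2}\int_0^1 \min(r,1-r)\, f''(r)\, dr,$$
and, for $v \neq 0$, a direct computation gives
$$D^2 F(v)(\eta,\eta) \;=\; p\,|v|^{p-2}|\eta|^2 \;+\; p(p-2)\,|v|^{p-4}\,\langle v,\eta\rangle^2.$$
The key observation is that, although the coefficient $p-2$ is negative here, a single application of Cauchy--Schwarz to the last term still produces the one-sided bound $D^2 F(v)(\eta,\eta) \geq p(p-1)\,|v|^{p-2}\,|\eta|^2$, which is useful precisely because $p>1$. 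Plugging this with $\eta = z-w$ into the representation and using the crude estimate $|\xi_r| \leq (|z|^2+|w|^2)^{1/2}$ (whose exponent $p-2<0$ reverses the inequality in our favour to give $|\xi_r|^{p-2} \geq (|z|^2+|w|^2)^{(p-2)/2}$), together with the elementary $\int_0^1 \min(r,1-r)\,dr = 1/4$, gives \eqref{lambdaconvex_sub} with the explicit choice $C(p) = p(p-1)/8$.

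The only technical nuisance, which I view as the main (and quite mild) obstacle, is that for $1<p<2$ the function $F$ is merely $C^1$ at the origin, so along segments with $\xi_{r_0}=0$ for some interior $r_0\in(0,1)$ (the case of antiparallel $z,w$) the second derivative $f''$ has an integrable singularity of order $|r-r_0|^{p-2}$. I would bypass this either by a direct justification of the integration by parts (since $f\in C^1([0,1])$ and $f''\in L^1(0,1)$ even in that borderline case), or, more cleanly, by first establishing the inequality for the smooth regularization $F_\varepsilon(v) := (|v|^2+\varepsilon^2)^{p/2}$, which satisfies the regularized Hessian bound $D^2 F_\varepsilon(v)(\eta,\eta) \geq p(p-1)\,(|v|^2+\varepsilon^2)^{(p-2)/2}\,|\eta|^2$, and then letting $\varepsilon \to 0^+$ by monotone convergence on both sides.
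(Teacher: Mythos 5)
Your proof is correct. The paper itself states this lemma without proof, remarking only that such convexity inequalities are ``well-known''; what you supply is the standard argument (an integral representation of the midpoint defect combined with a pointwise Hessian lower bound), and every step checks out. Specifically: the identity
\[
\frac{f(0)+f(1)}{2}-f(1/2)=\frac{1}{2}\int_0^1 \min(r,1-r)\,f''(r)\,dr
\]
follows from Taylor's formula with integral remainder about $r=1/2$; the Cauchy--Schwarz bound $\langle v,\eta\rangle^2\le |v|^2|\eta|^2$ used with the negative coefficient $p-2$ correctly yields $D^2F(v)(\eta,\eta)\ge p(p-1)|v|^{p-2}|\eta|^2$; the reversal $|\xi_r|^{p-2}\ge (|z|^2+|w|^2)^{(p-2)/2}$ is legitimate because $p-2<0$ and $|\xi_r|\le\max(|z|,|w|)\le(|z|^2+|w|^2)^{1/2}$; and $\int_0^1\min(r,1-r)\,dr=1/4$ gives $C(p)=p(p-1)/8$, which is indeed positive for $1<p<2$. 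You also correctly diagnose the only delicate point: when $z-w\ne 0$ and the segment crosses the origin, $f''$ has an isolated singularity of order $|r-r_0|^{p-2}$, which is integrable since $p-2>-1$, and $f'$ remains absolutely continuous (it behaves like $\mathrm{sgn}(r-r_0)|r-r_0|^{p-1}$ near $r_0$), so the integral representation is still valid; your alternative regularization by $F_\varepsilon(v)=(|v|^2+\varepsilon^2)^{p/2}$, for which $D^2F_\varepsilon(v)(\eta,\eta)\ge p(p-1)(|v|^2+\varepsilon^2)^{(p-2)/2}|\eta|^2$ holds everywhere, is an equally clean way to sidestep this. In short, a complete and correct filling-in of an omitted proof.
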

\begin{proposition}[Young inequality with remainder term]
\label{prop:giovanesotto2}
Let $1<p<2$. For every $z,\xi\in\mathbb{R}^N$ such that $|z|^2+|\xi|^2\not=0$ we have
\begin{equation}
\label{young_sub}
\langle \xi,z\rangle\le \frac{1}{p}\, |z|^p+\frac{1}{p'}\,|\xi|^{p'}-\frac{2}{p}\,C\, \left(|z|^2+|\xi|^\frac{2}{p-1}\right)^\frac{p-2}{2}\,\left|z-|\xi|^{p'-2}\,\xi\right|^2,
\end{equation}
where $C=C(p)>0$ is the constant appearing in \eqref{lambdaconvex_sub}. 
\end{proposition}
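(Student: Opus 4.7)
The plan is to follow verbatim the strategy of Proposition \ref{prop:giovanesopra2}, replacing \eqref{lambdaconvex} by its subquadratic analogue \eqref{lambdaconvex_sub}. First I would extract, from \eqref{lambdaconvex_sub} applied to the convex function $z \mapsto |z|^p/p$, the weighted above-tangent inequality
\[
\langle |w|^{p-2}w, z\rangle \le \frac{1}{p}\,|z|^p + \frac{1}{p'}\,|w|^p - \frac{2}{p}\,C\,\left(|z|^2 + |w|^2\right)^{\frac{p-2}{2}}\,|z-w|^2,
\]
valid for every $z,w \in \mathbb{R}^N$ with $|z|^2+|w|^2 \neq 0$ and with the same constant $C=C(p)>0$ appearing in \eqref{lambdaconvex_sub}. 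This is the exact subquadratic counterpart of the first display used in the proof of Proposition \ref{prop:giovanesopra2}, obtained by the identical midpoint-to-tangent passage for $\lambda$-convex functions.

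Next I would specialize to the choice $w = |\xi|^{p'-2}\,\xi$, which is permissible whenever $\xi \neq 0$. Using $p'-1 = 1/(p-1)$, one checks by direct computation that
\[
|w|^{p-2}\,w = \xi, \qquad |w|^p = |\xi|^{p'}, \qquad |w|^2 = |\xi|^{2/(p-1)}, \qquad z-w = z - |\xi|^{p'-2}\,\xi.
\]
Substituting these identities into the above-tangent inequality produces exactly \eqref{young_sub}. The edge case $\xi = 0$ (in which case $z \neq 0$ by hypothesis) reduces the claim to the trivial $0 \le |z|^p/p$, with no remainder term to interpret.

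The only delicate point, and the reason the hypothesis $|z|^2+|\xi|^2 \neq 0$ is indispensable in the subquadratic regime, is that the weight $(|z|^2+|w|^2)^{(p-2)/2}$ is singular at the origin when $1<p<2$. Thus the above-tangent step must be applied away from $(z,w)=(0,0)$; since $w = |\xi|^{p'-2}\xi$ vanishes precisely when $\xi$ does, the excluded set $\{|z|^2+|w|^2=0\}$ coincides with $\{|z|^2+|\xi|^2=0\}$ and no additional care is required. I emphasize that, in contrast with Proposition \ref{prop:giovanesopra2}, no subquadratic analogue of \eqref{young2} is available: the monotonicity of $t \mapsto t^{p-2}$ is reversed for $1<p<2$, so the singular weight cannot be absorbed via the triangle inequality into a clean $|z-|\xi|^{p'-2}\xi|^p$ remainder. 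This is why the statement records only the weighted form \eqref{young_sub}.
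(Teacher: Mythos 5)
Your proof is correct and follows exactly the route the paper intends: the paper's own proof of Proposition \ref{prop:giovanesotto2} simply states that it is ``exactly the same as that of \eqref{young}'' and omits the details, namely the above-tangent passage from \eqref{lambdaconvex_sub} followed by the substitution $w=|\xi|^{p'-2}\xi$, which is precisely what you carry out. One small inaccuracy worth flagging: when $\xi=0$ the remainder term does not vanish (it equals $-\frac{2}{p}C\,|z|^{p}$), so the claim reduces to $0\le \frac{1}{p}(1-2C)\,|z|^p$ rather than $0\le |z|^p/p$; this is still automatic since \eqref{lambdaconvex_sub} with $w=0$ forces $C\le \frac12-2^{-p}<\frac12$, so there is no gap, but the parenthetical ``no remainder term to interpret'' is not accurate.
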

\begin{proof}
The proof of \eqref{young_sub} is exactly the same as that of \eqref{young} and we omit it.
\end{proof}

\bigskip

\end{document}